\documentclass[english,11pt]{smfart}

\setcounter{tocdepth}{2}

\usepackage{etex}

\usepackage{amsbsy}
\usepackage{amsmath,amsfonts,amssymb,amsthm,mathrsfs,mathtools}

\usepackage{bm}

\usepackage[a4paper,vmargin={3cm,3cm},hmargin={3.5cm,3.5cm}]{geometry}
\linespread{1.2}

\usepackage[font=sf, labelfont={sf,bf}, margin=1cm]{caption}
\usepackage{graphicx}
\usepackage{epsfig}
\usepackage{latexsym}
\usepackage{xcolor}
\usepackage{ae,aecompl}
\usepackage{soul,framed}
\usepackage{comment}

\usepackage{xcolor}
\usepackage[pdfpagemode=UseNone,bookmarksopen=false,colorlinks=true,urlcolor=blue,citecolor=blue,citebordercolor=blue,linkcolor=blue]{hyperref}
\usepackage{smfhyperref}
\usepackage[capitalize]{cleveref}

\usepackage{pstricks}
\usepackage{enumerate}
\usepackage{tikz,animate,media9}						
\usepackage{todonotes}
\usepackage{pifont}
\usepackage{bm,marvosym}
\usepackage{algorithm}
\usepackage{algorithmic}

\usepackage{bbm}

\usepackage{tcolorbox}



\definecolor{aleacolor}{rgb}{0.16,0.59,0.78}


\hypersetup{
	breaklinks,
	colorlinks=true,
	linkcolor=aleacolor,
	urlcolor=aleacolor,
	citecolor=aleacolor}



\newcount\colveccount
\newcommand*\colvec[1]{
	\global\colveccount#1
	\begin{pmatrix}
		\colvecnext
	}
	\def\colvecnext#1{
		#1
		\global\advance\colveccount-1
		\ifnum\colveccount>0
		\\
		\expandafter\colvecnext
		\else
	\end{pmatrix}
	\fi
}

\newcommand{\ndN}{\mathbb{N}}

\newcommand{\ndR}{\mathbb{R}}
\newcommand{\ndC}{\mathbb{C}}

\renewcommand{\Pr}[1]{\mathbb{P}(#1)}

\newcommand{\Prb}[1]{\mathbb{P}\left(#1\right)}

\newcommand{\Ex}[1]{\mathbb{E}[#1]}

\newcommand{\Exb}[1]{\mathbb{E}\left[#1\right]}




\newcommand{\convdis}{\,{\buildrel d \over \longrightarrow}\,}
\newcommand{\convd}{\,{\buildrel d \over \longrightarrow}\,}

\newcommand{\convp}{\,{\buildrel p \over \longrightarrow}\,}


\newcommand{\eqdist}{\,{\buildrel d \over =}\,}







\newcommand{\cB}{\mathcal{B}}
\newcommand{\cC}{\mathcal{C}}
\newcommand{\cD}{\mathcal{D}}
\newcommand{\cE}{\mathcal{E}}
\newcommand{\cF}{\mathcal{F}}

\newcommand{\cH}{\mathcal{H}}
\newcommand{\cI}{\mathcal{I}}

\newcommand{\cL}{\mathcal{L}}
\newcommand{\cM}{\mathcal{M}}
\newcommand{\cN}{\mathcal{N}}

\newcommand{\cP}{\mathcal{P}}
\newcommand{\cQ}{\mathcal{Q}}

\newcommand{\cS}{\mathcal{S}}



\newcommand{\mC}{\mathsf{C}}
\newcommand{\mD}{\mathsf{D}}

\newcommand{\mG}{\mathsf{G}}
\newcommand{\mH}{\mathsf{H}}

\newcommand{\mM}{\mathsf{M}}

\newcommand{\mO}{\mathsf{O}}

\newcommand{\mQ}{\mathsf{Q}}



\newtheorem{theorem}{Theorem}[section]
\newtheorem{conjecture}[theorem]{Conjecture}
\newtheorem{corollary}[theorem]{Corollary}
\newtheorem{proposition}[theorem]{Proposition}
\newtheorem{lemma}[theorem]{Lemma}

\newtheorem{definition}[theorem]{Definition}

\numberwithin{equation}{section}

\keywords{cubic planar graphs, local convergence, core decompositions}

\title{\textbf{The Uniform Infinite Cubic Planar Graph}}
\date{}

\author{Benedikt Stufler}

\address[Benedikt Stufler]{Vienna University of Technology}
\email{benedikt.stufler at tuwien.ac.at}

\begin{document}

\vspace {-0.5cm}

\begin{abstract}
	We prove that the random simple connected cubic planar graph $\mathsf{C}_n$ with an even number $n$ of vertices admits a novel uniform infinite cubic planar graph (UICPG) as quenched local limit. We describe how the limit may be constructed by a series of random blow-up operations applied to the dual map of the type~III Uniform Infinite Planar Triangulation established by Angel and Schramm (Comm. Math. Phys., 2003).  Our main technical lemma is a contiguity relation between $\mathsf{C}_n$ and a model where the networks inserted at the links of the largest $3$-connected component of $\mathsf{C}_n$ are replaced by independent copies of a specific Boltzmann network. We prove that the number of vertices of the largest $3$-connected component concentrates at $\kappa n$ for $\kappa \approx 0.85085$, with Airy-type fluctuations of order $n^{2/3}$. The  second-largest component is shown to have significantly smaller size $O_p(n^{2/3})$.
\end{abstract}


\maketitle

\section{Introduction}

Cubic planar graphs are $3$-regular graph that admit a crossing-free embedding in the plane, or equivalently the $2$-sphere. Their study has  received increasing attention in recent  literature:  The asymptotic growth of the number of cubic planar graphs and multigraphs with $n$ vertices was determined in \cite{zbMATH05122852, zbMATH07213288}. Properties of random cubic planar graphs were studied in these works and  also in~\cite{zbMATH06639396,10.5565/PUBLMAT6612213}. The investigation of cubic planar graphs also stimulated further research directions, such as the study of $4$-regular planar graphs~\cite{zbMATH06827273} or cubic graphs on general orientable surfaces~\cite{zbMATH06841874}.

Throughout this work we let $\mC_n$ denote the simple connected cubic planar graph drawn uniformly at random from the collection of such graphs with a fixed $n$-element vertex set. It is clear that  this only makes sense when $n \ge 4$ is an even number, and that $\mC_n$ has $3n/2$ edges.

Our first main result determines the asymptotic local shape of $\mC_n$  near a uniformly selected vertex $v_n \in \mC_n$.

\begin{theorem}
	\label{te:main}
There is a Uniform Infinite Planar Cubic Graph (UICPG) $\hat{\mC}$ such that 
	\begin{align}
		\label{eq:annealed}
		(\mC_n, v_n) \convdis \hat{\mC}
	\end{align}
in the local topology. Furthermore, the conditional law $\mathfrak{L}((\mC_n, v_n) \mid \mC_n)$ admits the law $\mathfrak{L}(\hat{\mC})$ as distributional limit of random probability measures
\begin{align}
	\label{eq:quenched}
	\mathfrak{L}((\mC_n, v_n) \mid \mC_n) \convdis \mathfrak{L}(\hat{\mC}).
\end{align}
\end{theorem}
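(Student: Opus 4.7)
The plan is to exploit the classical core decomposition of connected cubic planar graphs: every such graph is reconstructed in a unique way from a $3$-connected cubic planar graph (its $3$-connected core) by inserting rooted networks at each of its links. Via the Boltzmann sampling principle, conditioning $\mathsf{C}_n$ on its $3$-connected core makes the inserted networks exchangeable, and this motivates introducing a modified model $\tilde{\mathsf{C}}_n$ in which the networks attached to the links of the largest $3$-connected component are independent copies of a tuned Boltzmann network. The crux of the proof is the contiguity lemma announced in the abstract: on a sequence of high-probability events one must establish that the laws of $\mathsf{C}_n$ and $\tilde{\mathsf{C}}_n$ have Radon--Nikodym derivative bounded away from $0$ and $\infty$.

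To justify singling out the largest $3$-connected component $\mathsf{M}_n$, I would perform a singularity analysis of the bivariate generating function tracking vertex number and $3$-connected core size, using the algebraic relations between networks, $2$-connected, and $3$-connected cubic planar graphs already developed in the enumerative literature cited in the introduction. A square-root singular behaviour yields the concentration $|\mathsf{M}_n| = \kappa n + O_p(n^{2/3})$ with Airy-type fluctuations, while a standard subcritical Boltzmann comparison bounds the second-largest $3$-connected block by $O_p(n^{2/3})$. Consequently a uniform vertex $v_n$ lies in $\mathsf{M}_n$ with probability $1-o(1)$.

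Next I would transfer local convergence from the core to $\mathsf{C}_n$. By planar (Whitney) duality, $3$-connected cubic planar graphs correspond bijectively to $3$-connected simple planar triangulations; the Angel--Schramm theorem then provides the local limit of $\mathsf{M}_n$ as the dual of the type~III UIPT. Working in the modified model $\tilde{\mathsf{C}}_n$, conditional on the core, the neighbourhood of a vertex $v_n \in \mathsf{M}_n$ is produced by inserting independent Boltzmann networks at each of the three incident links. Unfolding the recursive Boltzmann decomposition of a network into $2$- and $3$-connected pieces yields a well-defined sequence of random blow-up operations; I would take this construction as the definition of the UICPG $\hat{\mathsf{C}}$. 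Combining the core's local limit with the contiguity lemma then gives the annealed convergence~\eqref{eq:annealed}.

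For the quenched statement~\eqref{eq:quenched}, I would apply the standard two-point method: it suffices to verify that the joint local neighbourhoods of two independent uniform vertices $v_n^{(1)}, v_n^{(2)}$ converge in distribution to two independent copies of $\hat{\mathsf{C}}$. Since both vertices lie in $\mathsf{M}_n$ with high probability and two uniform faces of a large triangulation sit at large graph distance, the Boltzmann networks grafted at their respective neighbourhoods are asymptotically independent, and contiguity transports this independence back to $\mathsf{C}_n$. I expect the main obstacle to be the contiguity lemma itself: it demands delicate uniform control of the network and $3$-connected generating functions near their common singularity and a careful interplay between the $2$- and $3$-connected decompositions, which is where the technical bulk of the paper should reside; the geometric steps, by contrast, follow quite mechanically once this analytic input is in place.
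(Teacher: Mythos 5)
Your overall roadmap — core decomposition, a modified model where attached networks are replaced by i.i.d.\ Boltzmann networks, a contiguity lemma, Whitney duality and the type~III UIPT, a two-point argument for the quenched statement — aligns well with the paper. But there is a genuine and consequential error in the middle of the argument.

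You write ``Consequently a uniform vertex $v_n$ lies in $\mathsf{M}_n$ with probability $1-o(1)$.'' This is false. The core has $\kappa n + O_p(n^{2/3})$ vertices with $\kappa\approx 0.851$, so a uniform vertex lands in the core only with probability about $\kappa$, not $1-o(1)$; roughly $15\%$ of vertices lie inside the inserted networks. Because you build your candidate limit $\hat{\mathsf{C}}$ under this erroneous conditioning (``the neighbourhood of a vertex $v_n \in \mathsf{M}_n$ is produced by inserting independent Boltzmann networks at each of the three incident links''), the resulting object describes the local limit seen from a uniform \emph{core} vertex, which is a different law than the one seen from a uniform vertex of $\mathsf{C}_n$. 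The paper instead re-roots at a uniform random edge (equivalent to a uniform vertex for cubic graphs via a coin flip) and observes that a uniform edge falls into an attached component with probability proportional to that component's number of associated edges — the waiting-time paradox. Consequently the limit object $\hat{\mathsf{C}}$ is built by grafting a \emph{size-biased} Boltzmann network $\hat{\mathsf{D}}$ at the root edge of the dual UIPT, and unbiased independent copies of $\mathsf{D}$ at the other edges; the root vertex lives in $\hat{\mathsf{D}}$. Your construction has no size-biased piece at all, so it identifies the wrong limit and the annealed and quenched limits you would derive from it are off.

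A secondary, smaller gap: even granting contiguity, you still need to show that the vertices whose $r$-neighbourhood can ``see'' the $\lfloor\delta n\rfloor$ components left untouched by the contiguity coupling form a vanishing fraction as $\delta\downarrow 0$. This requires a separate argument (the paper runs an induction on $r$, controlling the growth of these neighbourhoods using the bounded-degree local limit of the modified model); it does not follow automatically from ``combining the core's local limit with the contiguity lemma.'' Your sketch elides this step, which is where much of the actual work in the endgame lives.
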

The \emph{annealed} convergence in~\eqref{eq:annealed} is a distributional limit of random elements of the space $(\mathfrak{G}, d_{\mathrm{loc}})$ of vertex-rooted locally finite connected simple graphs equipped with the local distance. In more concrete terms,~\eqref{eq:annealed} signifies that for any integer $k \ge 1$  the $k$-neighbourhood  $U_k(\mC_n, v_n)$ of the rooted graph $(\mC_n, v_n)$ satisfies for any rooted graph $H$
\[
	\Pr{U_k(\mC_n, v_n) \simeq H} \to \Pr{U_k(\hat{\mC}) \simeq H}.
\]
Here $\simeq$ denotes the existence of a root-preserving graph isomorphism. 

The conditional law $\mathfrak{L}((\mC_n, v_n) \mid \mC_n)$ corresponds to the uniform law of the $n$ vertex-rooted version of the random graph $\mC_n$.   The \emph{quenched} convergence in~\eqref{eq:quenched} is a distributional limit of random elements of the space $\mathbb{M}_1(\mathfrak{G})$ of probability measures on the Borel sigma-algebra of $\mathfrak{G}$.  In more concrete terms, the limit~\eqref{eq:quenched} means that the number $N_{H,k}$ of vertices in $\mC_n$ whose $k$-neighbourhood is isomorphic to $H$ satisfies
\[
	\frac{N_{H,k}}{n} \convp \Pr{U_k(\hat{\mC}) \simeq H}.
\]
Equivalently, quenched local convergence means that if we take two independent random vertices $v_n^{(1)}$ and $v_n^{(2)}$  of $\mC_n$ then the joint distributional convergence
\[
((\mC_n, v_n^{(1)}), (\mC_n, v_n^{(2)})) \convd (\hat{\mC}^{(1)},\hat{\mC}^{(2)})
\]
holds with $\hat{\mC}^{(1)}$ and $\hat{\mC}^{(2)}$ denoting independent copies of $\hat{\mC}$. The uniform $n$-vertex cubic planar graph that is not required to be connected admits the same random graph as local limit. See~Section~\ref{sec:cubicdisconnected} below for details on this extension.

The limit  UICPG is almost surely recurrent by the famous result~\cite{MR1873300} for locally convergent sequences of random graphs with bounded degrees, which was later generalized in~\cite{MR3010812} to graphs with light-tailed degree distributions. The name Uniform Infinite Planar Cubic Graph follows the naming tradition  of limits for different models of  random networks, as in the pioneering work on the Uniform Infinite Planar Triangulation~\cite{MR2013797}, and  further work in this active research field~\cite{MR3183575,MR3769811,budzinski2020local, MR3083919,zbMATH07306969, MR3256879,planar,kang2021local}.  It appears that still less is known about cubic planar graphs than about these models.

The structural results of~\cite{zbMATH05122852} showed that cubic planar graphs and networks may be decomposed recursively into series, parallel, isthmus, loop, and polyhedral networks. In particular, polyhedral networks consist of a $3$-connected cubic planar graph, whose edges may be replaced by non-isthmus networks. These $3$-connected graphs encountered in the full decomposition are called the $3$-connected component. We prove that the random cubic planar graph $\mC_n$ contains a unique giant $3$-connected component:

\begin{theorem}
	\label{te:main2}
	Let $V_n$ denote the number of vertices in the largest $3$-connected component of the uniform random $n$-vertex cubic planar graph $\mC_n$. Let
	\[
		h(t) = \frac{1}{\pi t} \sum_{n \ge 1} (-t 3^{2/3})^n \frac{\Gamma(2n/3 +1)}{n!} \sin(-2n\pi/3), \qquad t \in \ndR
	\]
	denote the density of the map type Airy distribution. There are algebraic constants \begin{align*}
			\kappa &= 0.850853090058314333870385348879612617197477\ldots \\
			c_v &=    1.205660773457703954344217302817493214574105\ldots 
	\end{align*}
	such that for any constant $M>0$
	\begin{align}
		\Pr{V_n = \kappa n + t n^{2/3}} = n^{-2/3} (2 c_v h(c_v t) + o(1))
	\end{align}
	uniformly for all $t \in [-M, M]$ satisfying $\kappa n + t n^{2/3} \in 2 \ndN$. Consequently, 
	\begin{align}
		\label{eq:vclt}
		\frac{V_n - \kappa n}{n^{2/3}} \convdis V_{3/2}
	\end{align}
	for a $3/2$-stable random variable $V_{3/2}$  with density $c_v h(c_v t)$.
\end{theorem}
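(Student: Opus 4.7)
The argument combines the network decomposition of cubic planar graphs from~\cite{zbMATH05122852} with a singularity analysis of map-Airy type, in the spirit of Banderier--Flajolet--Schaeffer--Soria. Let $D(z)$ denote the exponential generating function of cubic planar \emph{networks} (with appropriate rooting) and $T(z)$ that of rooted $3$-connected cubic planar graphs. The decomposition expresses the connected generating function $C(z)$ via a series--parallel--polyhedral scheme whose polyhedral part $T(D(z))$ substitutes networks at each edge of a $3$-connected core. I would first promote this to a bivariate object by introducing a variable $u$ marking vertices of the root $3$-connected component, so that $[z^n u^k] C(z, u)$ enumerates connected cubic planar graphs whose root-containing $3$-connected component has exactly $k$ vertices.

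The central analytic input is that this scheme sits at the \emph{critical composition} regime: $D(\rho_D) = \tau_T$, with $\rho_D, \tau_T$ the respective radii of convergence of $D$ and $T$. Here $T$ carries a map-type singularity with exponent $3/2$, inherited via duality from the map-Airy singularity of simple triangulations, while $D$ has a square-root singularity; criticality is read off from the explicit algebraic systems in~\cite{zbMATH05122852}. Applying a Hankel-contour saddle-point estimate to $[z^n u^k] C(z, u)$ at $k = \kappa n + t n^{2/3}$, together with the singular expansions
\[
T(\zeta) = T(\tau_T) - c_1 (\tau_T - \zeta)^{3/2} + \cdots, \qquad D(z,u) = \tau_T - c_2 \sqrt{\rho_D - z} + c_3 (u-1) + \cdots,
\]
produces a contour integral that evaluates to the map-Airy density $h(c_v t)$ with $c_v$ determined algebraically by $c_1, c_2, c_3$. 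The constant $\kappa$ is the ratio of logarithmic derivatives at criticality, expressing the asymptotic fraction of vertices lying in the root $3$-connected component, and the leading factor $2$ in $2 c_v h(c_v t)$ accounts for the parity restriction $n \in 2\ndN$ that halves the lattice of admissible $k$. This yields the stated local limit theorem, uniformly in $t \in [-M, M]$, for the size $V_n^\circ$ of the \emph{root} $3$-connected component.

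The last step is to transfer the result from $V_n^\circ$ to the size $V_n$ of the \emph{largest} $3$-connected component. Via a tail estimate on the size of a typical non-root $3$-connected block, I would show that the second-largest such block is of size $O_p(n^{2/3})$; this requires only subcritical singularity analysis on the generating function of a block detached from the root. Combined with $V_n^\circ = \kappa n + O_p(n^{2/3})$, this forces $V_n = V_n^\circ$ with probability tending to one, so the local limit theorem transfers to $V_n$, and~\eqref{eq:vclt} follows as a corollary. The main obstacle I anticipate is the saddle-point analysis itself: controlling the $u$-perturbation on the Hankel contour uniformly in $t$ on a compact interval and tracking the precise algebraic constants so that $\kappa$ and $c_v$ come out as the stated algebraic numbers. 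By comparison, the identification of root block with largest block is routine once the tail bound is in place.
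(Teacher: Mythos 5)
Your analytic machinery is the same as the paper's: both rely on the map-Airy singularity of simple triangulations transferred to $3$-connected cubic graphs by duality, a critical composition scheme $T(D(z))$, and the Banderier--Flajolet--Schaeffer--Soria local limit theorem for core sizes. The constants $\kappa$ and $c_v$ would come out the same way. The divergence is entirely combinatorial, and it is where your plan has a genuine gap.

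Your plan proposes to first prove the local limit theorem for the size $V_n^\circ$ of a ``root-containing $3$-connected component'' and then transfer to the largest component $V_n$. The problem is twofold. First, ``root-containing $3$-connected component'' is not well-defined: after rooting at an oriented edge, the resulting simple network may be a series, parallel, loop or isthmus network, and then the root edge does not lie in any $3$-connected component at all; it only makes sense to speak of ``the'' core after unwinding the recursion to the first $\cH$-network, and that is not canonical for general networks. Second, and more fundamentally, even if one fixes a notion of root core (e.g.\ the outermost polyhedral core encountered along the decomposition), that root core does \emph{not} satisfy the map-Airy local limit theorem. In a critical composition scheme (the case here since $\rho^2(1+\cD(\rho))^3 = \tau$), the root core exhibits a dichotomy: with probability $h_{\mathrm{s}} > 0$ it has $O(1)$ size, and only with the complementary probability $h_{\mathrm{l}} < 1$ does it live on the scale $\kappa n + O(n^{2/3})$. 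So $V_n^\circ$ has a discrete atom at small sizes, and one cannot conclude $V_n = V_n^\circ$ with high probability; the tail bound on the second-largest block does not rescue this.

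The paper avoids this entirely by a double-rooting argument. It sets $c_{n,k}$ to be the number of cubic planar graphs with a marked vertex belonging to a $k$-vertex $3$-connected component and $c_{n,k}^*$ to be the number with an arbitrary marked vertex whose \emph{largest} $3$-connected component has $k$ vertices. Because $3$-connected components of a cubic planar graph do not overlap and $k = \kappa n + tn^{2/3} > n/2$ forces uniqueness, the identity $n\,c_{n,k} = k\,c_{n,k}^*$ holds exactly, and $c_{n,k}$ in turn is computed via the scheme $k\,[x^n u^{3k/2}]\bar{\cM}(x, u(1+\cD(x)))$ and the composition-scheme theorem. This directly targets the largest component without ever referencing a ``root'' core. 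To repair your plan you would essentially have to rediscover this (or an equivalent size-biasing) step; the saddle-point analysis and constant extraction you outline would then go through as stated.
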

The constants $\kappa$ and $c_v$ admit algebraic expressions given in Equations~\eqref{eq:kappa} and ~\eqref{eq:cvconst} below. Since $3$-connected components of cubic planar graphs do not overlap (see Section~\ref{sec:netdec} for more details), it follows directly from $\kappa>1/2$ that the largest component is with high probability unique. The second-largest $3$-connected component has size $O_p(n^{2/3})$, see Corollary~\ref{co:second} below.

Exhibiting linearly sized largest components is characteristic for planar structures~\cite{MR1871555,banderier2021phase}, and we employ a mix of these analytic methods and singularity expansions from~\cite{zbMATH05122852, zbMATH07213288}. Limit theorems for the largest $2$-connected and $3$-connected components in various classes of  cubic planar maps (as opposed to graphs) are given in the work~\cite{coreprep}, alongside new proofs for their enumeration.

We may view $\mC_n$ as the result of blowing up the $3V_n/2$ edges of its  $3$-connected core $\cM(\mC_n)$ by non-isthmus networks $(\cD_i(\mC_n))_{1 \le i \le 3 V_n/2}$. To be fully precise, $\cM(\mC_n)$ is only well-defined if $\mC_n$ has a unique largest $3$-connected component. However, since this happens with probability tending to $1$, we may safely set $\cM(\mC_n)$ to an arbitrary place-holder value if there is more than one $3$-connected component with maximal size.

 An important part  of the proof of Theorems~\ref{te:main} is that $\mC_n$ satisfies a contiguity relation to a model where these components are resampled independently according to a Boltzmann  network model $\mD$ defined in Definition~\ref{de:defd} below. We emphasize this result here, as we believe that it has many further uses for describing the asymptotic shape of the uniform cubic planar graph $\mC_n$, that go beyond the applications considered in the present work.

\begin{theorem}	
	\label{te:main3}
	Let $(\mD(i))_{i \ge 1}$ denote independent copies of the Boltzmann  network $\mD$.
	For any $\epsilon >0$ and  $0< \delta < 3 \kappa / 2$ there exist constants $0<c<C$ and $N >0$ and sets $(\cE_n)_{n \ge N}$ such that for all $n \in 2 \ndN$ with $n \ge N$ 
	\begin{align}
		\label{eq:doa}
		\Prb{ (\cM(\mC_n), (\cD_i(\mC_n))_{1\le i \le 3V_n/2 - \lfloor \delta n \rfloor })  \notin \cE_n  } < \epsilon
	\end{align}
	and
	\begin{align}
		\label{eq:dob}
		\Prb{ (\cM(\mC_n),  (\mD(i))_{1\le i \le 3V_n/2 - \lfloor \delta n \rfloor }) \notin \cE_n  } < \epsilon
	\end{align}
	and for all elements $E \in \cE_n $
	\begin{align}
		\label{eq:N2}
		c < \frac{\Prb{ (\cM(\mC_n), (\cD_i(\mC_n))_{1\le i \le 3V_n/2 - \lfloor \delta n \rfloor })  =E  }} {	\Prb{ (\cM(\mC_n),  (\mD(i))_{1\le i \le 3V_n/2 - \lfloor \delta n \rfloor }) =E  }}  < C.
	\end{align}
\end{theorem}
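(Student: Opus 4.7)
The plan is to recognize the joint distribution of the core and its attached networks as a conditioned i.i.d.\ Boltzmann sample, and to reduce the two-sided bound~\eqref{eq:N2} to a local limit theorem for sums of i.i.d.\ network sizes. By the network decomposition from~\cite{zbMATH05122852}, conditionally on $\cM(\mC_n) = M$ with $m := 3|V(M)|/2$ links, the attached networks $(\cD_i(\mC_n))_{1\le i \le m}$ are uniform over tuples of non-isthmus networks whose internal vertex counts $|\cdot|_*$ sum to $n - |V(M)|$. Equivalently, this conditional law coincides with that of $(\mD(1),\ldots,\mD(m))$ conditioned on $\sum_{i=1}^m |\mD(i)|_* = n - |V(M)|$, where $\mD$ is the Boltzmann network of Definition~\ref{de:defd} with parameter $y_0$ tuned so that $\mu := \Ex{|\mD|_*}$ satisfies $\kappa + \tfrac{3\kappa}{2}\mu = 1$.

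Writing $S_j := \sum_{i=1}^j |\mD(i)|_*$, $k := \lfloor \delta n \rfloor$, and $S := \sum_{i=1}^{m-k} |D_i|_*$, a direct cancellation of Boltzmann weights and partition functions yields the central identity
\begin{align*}
\frac{\Prb{ \cM(\mC_n) = M, \; (\cD_i(\mC_n))_{1\le i\le m-k} = (D_i)_i }}{\Prb{ \cM(\mC_n) = M, \; (\mD(i))_{1\le i\le m-k} = (D_i)_i }} = \frac{\Pr{S_k = n - |V(M)| - S}}{\Pr{S_m = n - |V(M)|}}.
\end{align*}
Bounding this ratio uniformly reduces to a local limit theorem for $(S_j)$ evaluated at linear-size targets with deviations of order $n^{2/3}$.

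I will take $\cE_n$ to be the event that both $\bigl||V(M)| - \kappa n\bigr| \le A n^{2/3}$ and $|S - \mu(m-k)| \le A n^{2/3}$ hold, for a constant $A = A(\epsilon)$ chosen large. The first condition involves only $\cM(\mC_n)$, whose marginal law is the same under~\eqref{eq:doa} and~\eqref{eq:dob}, and has probability at least $1 - \epsilon/3$ by Theorem~\ref{te:main2}. Under~\eqref{eq:dob} the second condition follows from the stable CLT applied to the i.i.d.\ sum $S_{m-k}$. Under~\eqref{eq:doa} it follows from a Bayes-type computation: given $S_m = n - |V(M)|$, the conditional law of $S_{m-k}$ has weight at $s$ proportional to $\Pr{S_{m-k} = s}\,\Pr{S_k = n - |V(M)| - s}$, and the stable LLT shows this distribution concentrates around $\mu(m-k)$ at scale $n^{2/3}$. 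On $\cE_n$ the targets $n - |V(M)|$ and $n - |V(M)| - S$ both lie within $O(n^{2/3})$ of $\mu m$ and $\mu k$ respectively. Applying a $3/2$-stable local limit theorem to $S_j$ with $j \asymp n$ then yields $\Pr{S_m = n-|V(M)|} = \Theta(n^{-2/3})$ and $\Pr{S_k = n-|V(M)|-S} = \Theta(n^{-2/3})$ with constants uniform on the compact deviation window, which delivers the constants $0 < c < C$ required by~\eqref{eq:N2}.

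The main technical obstacle is the stable local limit theorem at scale $n^{2/3}$. It requires (a) a polynomial tail estimate $\Pr{|\mD|_* = s} = \Theta(s^{-5/2})$, obtained by applying standard singularity-analysis transfer theorems to the network generating function at its dominant singularity $y_0$, whose singular expansion is accessible from the enumerative framework of~\cite{zbMATH05122852, zbMATH07213288}; (b) a careful identification of the sublattice on which $|\mD|_*$ is supported (imposed by the parity and structural constraints of cubic networks), so that the LLT is read on the correct arithmetic progression; and (c) uniform convergence of the form $\Pr{S_j = x} = j^{-2/3}\bigl(\psi_{3/2}((x - \mu j)j^{-2/3}) + o(1)\bigr)$ on the intermediate window $|x - \mu j| \le A n^{2/3}$, obtained by Fourier inversion with quantitative control of the characteristic function of $|\mD|_*$ near the origin. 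With these ingredients in place, the conclusion follows by combining the identity above, Theorem~\ref{te:main2}, and the LLT.
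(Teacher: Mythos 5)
Your proposal is correct and follows essentially the same route as the paper's proof: both recognize the conditional law of $(\cD_i(\mC_n))_i$ given $\cM(\mC_n)$ as a conditioned i.i.d.\ Boltzmann sample (this is the paper's identity~\eqref{eq:zy}), reduce the two-sided bound~\eqref{eq:N2} to a ratio of two local-limit probabilities for sums of i.i.d.\ sizes, define $\cE_n$ by the same two $O(n^{2/3})$-window constraints on the core size and on the sum of the first $3V_n/2 - \lfloor \delta n\rfloor$ component sizes, and close the argument via a $3/2$-stable LLT together with Theorem~\ref{te:main2}.
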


It appears that the assumptions in Theorem~\ref{te:main3} on the number $\lfloor \delta n \rfloor$ of components cannot be relaxed: As $\delta \to 0$, the lower bound $c$  we construct in the proof tends to zero and the upper bound $C$ tends to infinity. Moreover, the total variation distance between $(\cM(\mC_n), (\cD_i(\mC_n))_{1\le i \le 3V_n/2 - \lfloor \delta n \rfloor })$ and $(\cM(\mC_n),  (\mD(i))_{1\le i \le 3V_n/2 - \lfloor \delta n \rfloor })$ does \emph{not} tend to zero: Knowing the total mass of a linear number of components places a bias on the size of the core $\cM(\mC_n)$. See also the remarks at the end of Section~\ref{sec:components} below for further discussions of this.

The size of the Boltzmann network $\mD$ follows a power law with index $-5/2$, see Equation~\eqref{eq:dtail}. By extremal value statistics it follows immediately  that the largest element in $n$ independent samples of $\mD$ has size $O_p(n^{2/3})$.  Thus, Theorem~\ref{te:main3} and a time reversal argument immediately determine an upper bound for the size of the second-largest $3$-connected component of $\mC_n$:

\begin{corollary}
	\label{co:second}
	The number of vertices $|\cD_i(\mC_n)|$ satisfies
	\begin{align}
		\label{eq:uppera}
		\max_{1 \le i \le 3 V_n /2} |\cD_i(\mC_n)| = O_p(n^{2/3}).
	\end{align}
	For any sequence $t_n \to \infty$ the number of vertices $V_n^{(2)}$ of the second-largest $3$-connected component of $\mC_n$ satisfies
	\begin{align}
		\label{eq:lowerb}
		\Pr{ n^{2/3} / t_n \le V_n^{(2)} \le n^{2/3} t_n} \to 1
	\end{align}
as $n \in 2 \ndN$ tends to infinity.
\end{corollary}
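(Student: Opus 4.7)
The overall strategy is to transfer the power-law tail $\Pr{|\mD| \ge k} \sim c k^{-3/2}$ of the Boltzmann network (from the density tail~\eqref{eq:dtail}) to the components $\cD_i(\mC_n)$ via the contiguity of Theorem~\ref{te:main3}, and then to invoke the recursive network decomposition of $\mC_n$ to pass to statements about $V_n^{(2)}$. For~\eqref{eq:uppera}, I would fix $0 < \delta < 3\kappa/2$ and $\epsilon > 0$ and apply Theorem~\ref{te:main3}. Under the independent Boltzmann model, the $-3/2$ survival tail of $|\mD|$ together with a union bound gives
\[
\Prb{\max_{1 \le i \le (3\kappa/2) n} |\mD(i)| \ge K n^{2/3}} = O(K^{-3/2})
\]
uniformly in $n$. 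The multiplicative bound~\eqref{eq:N2} on $\cE_n$ transfers this estimate, up to a small additive error, to the first $3V_n/2 - \lfloor \delta n \rfloor$ components $\cD_i(\mC_n)$. To cover the remaining $\lfloor \delta n \rfloor$ indices, I would re-apply Theorem~\ref{te:main3} after reversing the canonical ordering of the edges of $\cM(\mC_n)$: this is the \emph{time reversal} mentioned in the introduction, and it moves the previously missing block of indices into the first block covered by the theorem. A union bound over the two orderings then yields~\eqref{eq:uppera}.

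The upper bound $V_n^{(2)} \le n^{2/3} t_n$ in~\eqref{eq:lowerb} is then immediate from the network decomposition: every $3$-connected component of $\mC_n$ other than the core $\cM(\mC_n)$ is a $3$-connected subgraph of some $\cD_i(\mC_n)$, hence has at most $|\cD_i(\mC_n)|$ vertices. Consequently $V_n^{(2)} \le \max_i |\cD_i(\mC_n)| = O_p(n^{2/3})$, which divided by $t_n \to \infty$ gives the claimed upper tail.

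For the lower bound $V_n^{(2)} \ge n^{2/3}/t_n$, write $V_3(N)$ for the number of vertices of the largest $3$-connected component of a network $N$. Since $V_n^{(2)} \ge \max_i V_3(\cD_i(\mC_n))$, by Theorem~\ref{te:main3} combined with the same time-reversal step it suffices to prove $\max_{1 \le i \le cn} V_3(\mD(i)) \ge n^{2/3}/t_n$ with probability tending to $1$, for some $c > 0$. This reduces to a one-sided tail bound of the form $\Pr{V_3(\mD) \ge k} \gtrsim k^{-3/2}$, which I expect to derive from the decomposition of $\mD$ into types: with positive probability the root type of $\mD$ is polyhedral, and conditionally on this a positive fraction of the vertices of $\mD$ typically lie in the root $3$-connected core, so the $k^{-3/2}$ tail of $|\mD|$ is inherited by $V_3(\mD)$. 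The main obstacle is precisely this lower-tail estimate for $V_3(\mD)$: it requires a small amount of additional singularity analysis of the polyhedral network generating function from~\cite{zbMATH05122852}, whereas by contrast the time-reversal and the extremal-value bookkeeping are routine consequences of Theorem~\ref{te:main3}.
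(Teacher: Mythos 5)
Your proposal is correct and follows essentially the same strategy as the paper: the time-reversal trick for~\eqref{eq:uppera}, the observation that every non-maximal $3$-connected component sits inside some $\cD_i(\mC_n)$ for the upper half of~\eqref{eq:lowerb}, and the reduction of the lower half of~\eqref{eq:lowerb} to a lower-tail estimate on the largest $3$-connected component $V_3(\mD)$ of the Boltzmann network. Where you write that the latter ``requires a small amount of additional singularity analysis'', the paper has exactly this prepared in advance as Proposition~\ref{pro:doit}: it states that the core-size concentration of Theorem~\ref{te:main2} (with the same constant $\kappa$) and the contiguity of Theorem~\ref{te:main3} persist for $\mD_n$, the Boltzmann network conditioned on its size, and its proof re-runs the composition-scheme argument with $\frac{\cM(x,u(1+\cD(x)))}{1+\cD(x)}(1+\cD^*(x))$ in place of $\bar{\cM}(x,u(1+\cD(x)))$. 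The paper then gets the lower bound by noting that the largest of $n$ independent copies of $\mD$ has size $\ge n^{2/3}/t_n$ with high probability and, by~\eqref{eq:concentrationvnd}, a $3$-connected core of roughly $\kappa$ times that size. Your sketch via ``condition on a polyhedral root'' would also work as a lower bound, but the precise conditional statement you need (core is a positive fraction of the size, \emph{given that the size is large}) is precisely what Proposition~\ref{pro:doit} supplies; be careful that ``typically'' there must mean typically-given-large-size, since the unconditioned $\mD$ is usually of constant size.
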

The lower stochastic bound in~\eqref{eq:lowerb} requires a few extra arguments. We provide a proof at the end of Section~\ref{sec:components} below.

In our proof of Theorem~\ref{te:main} we also use Theorem~\ref{te:main3} to clarify the connection between the  Uniform Infinite Cubic Planar Graph and the (type III) Uniform Infinite Planar Triangulation constructed in~\cite{MR2013797}. Specifically, the UICPG may be constructed in three steps:
\begin{enumerate}
	\item Construct the dual map $\hat{\mM}$ of the type III UIPT.
	\item Replace each non-root edge of $\hat{\mM}$ by an independent copy of the Boltzmann network $\mD$ specified in Definition~\ref{de:defd}.
	\item Replace the root-edge of $\hat{\mM}$ by a size-biased version $\hat{\mD}$ of the Boltzmann network specified in Definition~\ref{def:sizebiased}.
\end{enumerate}
The edge replacement process is illustrated in  Figure~\ref{fi:poly}. The root vertex of the UICPG is determined by selecting an edge associated to $\hat{\mD}$ at random and distinguishing one of its ends according to a fair independent coin flip.


Specifically, we apply Theorem~\ref{te:main} to show that the shape of $\mC_n$ is determined by the $3$-connected core with the attached components resampled by independent copies of $\mD$. This is justified by arguing that the smaller we take the $\epsilon>0$ in Theorem~\ref{te:main3}, the less vertices will ``observe'' the $\epsilon n$ components that we leave untouched. That is, for any fixed  integer $k \ge 1$, taking $\epsilon>0$ small diminishes the percentage of vertices whose  graph-distance is at most $k$ from one of these $\epsilon n$ components. This allows us to deduce that local convergence of $\mC_n$ is equivalent to local convergence of a model where \emph{all} $3 V_n /2$ components attached to the links of the $3$-connected core of $\mC_n$ are replaced by independent copies of $\mD$. Now, in this model, a random point may either belong to the $3$-connected core, or to one of these components. If it falls into one of these components, then by the famous waiting time paradox the distribution of this component follows a \emph{size-biased} version of $\mD$, because larger components are more likely to be observed than smaller ones. The core itself is distributed like the dual of a randomly sized simple triangulation, and we argue that it converges in the quenched sense to the dual of the  type III Uniform Infinite Planar Triangulation. All remaining components observed by a random point behave like independent copies of $\mD$, hence summing over all possible partitions of the neighbourhood of a random point into parts from the core, parts from the size-biased component, and parts from the remaining components we obtain convergence of $\mC_n$ to the limit object described in the three steps above.

\subsection*{Summary of proof strategy}

The proof may be divided into two parts, carried out in Sections~\ref{sec:qloc} and~\ref{sec:cuplag}. We briefly summarize the steps for each part. 

\paragraph*{First part.} The breakthrough work~\cite{MR2013797} showed annealed local convergence of various types of random planar triangulations towards a Uniform Infinite Planar Triangulation of the corresponding type. This includes the model of so-called type III triangulations, which are $3$-connected and simple. In Section~\ref{eq:sipla} we extend the annealed limit to quenched local convergence. Section~\ref{sec:cupam} transfers this convergence by multiple applications of the continuous mapping theorem to the dual map construction. We obtain quenched local convergence of uniform random $3$-connected cubic planar maps. By Whitney's theorem, any $3$-connected cubic planar graph corresponds to precisely two $3$-connected planar maps. Hence  these random maps are distributed like uniform random $3$-connected cubic planar graphs.

\paragraph*{Second part.} Cubic networks are cubic planar graphs with an oriented root-edge that is allowed to be a loop or part of a double edge. Section~\ref{sec:netdec} recalls the decomposition of cubic networks by~\cite{zbMATH05122852} into parallel, series, loop, isthmus, and polyhedral networks. In particular, polyhedral networks are obtained from $3$-connected cubic planar graphs by replacing their edges with non-isthmus networks. Hence it consists of a $3$-connected core and non-isthmus components.  Section~\ref{sec:largest} determines the size of the  largest $3$-connected core in a random cubic planar graph. We provide a proof of Theorem~\ref{te:main2} that accurately quantifies the fluctuations around a constant multiple $\kappa n$. Thus, the shape of $\mC_n$ is determined by giant $3$-connected component whose edges are replaced by smaller networks.
In  Section~\ref{sec:components}  we study these smaller components attached to the giant core in more detail. We prove Theorem~\ref{te:main3},  showing that for any fixed but arbitrarily small $\epsilon>0$, the core together with all but the first $\epsilon n$ components satisfies a \emph{contiguity} relation to a modified model where these components are independent copies of a Boltzmann  network. The smaller the $\epsilon$ the smaller the impact on the local distance. This allows us to deduce in the final Section~\ref{sec:endgame} that quenched local convergence of large cubic planar networks is equivalent to a model where \emph{all} components attached to the core are independent copies of a Boltzmann network. Care has to be taken that, as in the famous waiting time paradox, a random vertex is more likely to fall in a large component than in a small one. Hence the vicinity of a random point  consists of parts in a \emph{size-biased} Boltzmann network, parts of the $3$-connected core, and parts of the remaining independent Boltzmann  networks that are attached to the core. Summing over all possible configurations of parts and using the convergence of large $3$-connected planar cubic graphs from Section~\ref{sec:qloc} it follows that both models admit a quenched local limit. 

\subsection*{Notation}
We  let $\mathbb{N} = \{1, 2, \ldots\}$ denote the collection of positive integers. We assume all considered random variables to be defined on a common probability space whose measure is denoted by $\mathbb{P}$. All unspecified limits are taken as $n$ becomes large, possibly taking only values in a subset of the natural numbers.  We use $\convdis$ and $\convp$ to denote convergence in distribution and probability. Equality in distribution is denoted by $\eqdist$. Weak convergence of probability measures is denoted by  $\Rightarrow$. An event holds with high probability, if its probability tends to $1$ as $n \to \infty$.
We let $O_p(1)$ denote an unspecified random variable $X_n$ of a stochastically bounded sequence $(X_n)_n$, and write $o_p(1)$ for a random variable $X_n$ with $X_n \convp 0$.  The total variation distance is denoted by $d_{\mathrm{TV}}$. The law of a random variable $X$ is denoted by $\mathfrak{L}(X)$.


\section{Quenched local convergence of $3$-connected cubic planar graphs}

\label{sec:qloc}

The goal of this section is to establish quenched local convergence of uniform $3$-connected cubic planar graphs. We proceed in two steps. First, we verify quenched convergence of simple planar triangulations to Angel and Schramm's Uniform Infinite Planar Triangulation. In the second step, we apply the continuous mapping theorem to transfer this convergence to the dual maps. The dual of a $3$-connected map is $3$-connected, so we arrive at local convergence of uniform $3$-connected cubic planar maps. By Whitney's theorem, each $3$-connected map has precisely two embeddings in the plane, hence this is equivalent to convergence of $3$-connected cubic planar graphs.

\subsection{Simple planar triangulations}
\label{eq:sipla}

Roughly speaking, a \emph{planar map} is a drawing of a connected multi-graph on the $2$-sphere such that edges are represented by arcs that may only intersect at their endpoints. Planar maps are considered up to orientation-preserving homeomorphism of the $2$-sphere, so that there only finitely many maps with a given number of edges. We will only consider rooted planar maps, where a \emph{root edge} is distinguished and oriented in order to eliminate symmetries.  The \emph{faces} of a planar map correspond to the connected components created when removing the planar map from the $2$-sphere. We refer to the face to the right of the root edge as the outer face. Each face has a \emph{boundary}, consisting of a counter-clockwise cyclically ordered list of sides of edges. This way, we may view each edge as a pair of \emph{half-edges} that  are oriented in opposing directions. The place where two consecutive half-edges at the boundary of a face meet is called a \emph{corner}. A corner is \emph{incident} to a unique vertex, and corners correspond bijectively to half-edges. In particular, rooting maps at an oriented root edge is equivalent to rooting maps at a corner. The \emph{degree} of a face is the number of half-edges on its boundary.

A planar map is called a  \emph{triangulation}, if each of its faces  has degree~$3$. There are more general versions of this definition where an exception is made for the outer face, but here we will only consider the case where the outer face has degree $3$ as well. A multi-graph is called \emph{simple}, if it has no multi-edges or loops. That is, between any unordered pair of distinct vertices there is at most one edge, and no edge starts and ends at the same vertex. We adapt the convention from~\cite{zbMATH06812193} and call a planar map simple, if its underlying multi-graph is simple. Hence a triangulation with at least $4$ vertices is simple if and only if it is $3$-connected.  The reader should take care, however, that different convention have been used in the literature. For example, Tutte~\cite{zbMATH03169204} additionally requires a simple triangulation to have no separating $3$-cycles. 

The asymptotic growth of the number $q_n$ of simple triangulations with $n + 2$ vertices (and hence $2n$ faces and $3n$ edges) was determined by~\cite{TUTTE1973437, zbMATH03169204}. It is given by
\begin{align}
	\label{eq:enumsimple}
	q_n = \frac{\sqrt{6}}{32 \sqrt{\pi}} n^{-5/2} \left( \frac{27}{256} \right)^{-n} \left(1 + O\left(\frac{1}{n}\right) \right). 
\end{align}

Let $\mQ_n$ denote the random planar map that is uniformly selected among all simple triangulation of the $2$-sphere with $n+2$ vertices.\footnote{It might seem more natural to use the letter $T$ to denote triangulations. However, the author uses the letter $T$ exclusively to denote trees.} For the purpose of proving local convergence of cubic planar graphs, we will require  quenched local convergence:
\begin{lemma}
	\label{le:convsimple}
	Let $(\mQ_n, c_n)$ denote the uniform simple $(n+2)$-vertex triangulation $\mQ_n$ re-rooted at a uniformly selected corner $c_n$.  There is a random locally finite simple triangulation $\hat{\mQ}$ such that
	\[
	\mathfrak{L}((\mQ_n, c_n) \mid \mQ_n) \convdis \mathfrak{L}(\hat{\mQ}).
	\]
	The convergence preserves the planar structure of the maps.
\end{lemma}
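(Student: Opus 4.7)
The plan is to upgrade Angel and Schramm's annealed local convergence $(\mQ_n, c_n) \convdis \hat{\mQ}$ from~\cite{MR2013797} to the quenched statement. By the standard equivalence reviewed just below Theorem~\ref{te:main}, quenched convergence to a deterministic limit measure is equivalent to the joint convergence
\[
((\mQ_n, c_n^{(1)}),(\mQ_n, c_n^{(2)})) \convd (\hat{\mQ}^{(1)}, \hat{\mQ}^{(2)})
\]
of two independent uniform corners towards two independent copies of $\hat{\mQ}$. Concretely, for every $k \ge 1$ and every pair of rooted simple triangulations $H_1, H_2$ of radius at most $k$, I need to show
\[
\Prb{U_k(\mQ_n, c_n^{(1)}) \simeq H_1,\ U_k(\mQ_n, c_n^{(2)}) \simeq H_2} \to \Pr{U_k(\hat{\mQ}) \simeq H_1}\, \Pr{U_k(\hat{\mQ}) \simeq H_2}.
\]

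First I would dispose of the diagonal regime: tightness of $|U_k(\mQ_n, c_n)|$, itself a by-product of the annealed convergence, gives $\Prb{d_{\mQ_n}(c_n^{(1)}, c_n^{(2)}) \le 2k} = O(1/n)$. On the complementary far-apart event the two $k$-hulls are disjoint submaps of $\mQ_n$, and excising them leaves a simple triangulation of a two-holed sphere with $n - v_{H_1} - v_{H_2}$ internal vertices and boundary lengths $p_1 = p_{H_1}$, $p_2 = p_{H_2}$. Writing $t(p, m)$ for the number of rooted simple triangulations of a $p$-gon with $m$ interior vertices and $s(p_1, p_2, m)$ for the analogous count on an annulus, the joint probability becomes a combinatorial factor times $s(p_1, p_2, n - v_{H_1} - v_{H_2})/q_n$, paralleling the univariate expression $t(p, n - v_H)/q_n$ that underlies the annealed convergence.

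The crux is then the singular-asymptotic identity
\[
t(p, m) \sim c(p)\, m^{-5/2} (256/27)^m, \qquad s(p_1, p_2, m) \sim \frac{c(p_1)\, c(p_2)}{c_0}\, m^{-5/2} (256/27)^m,
\]
with $c_0 = \sqrt{6}/(32\sqrt{\pi})$ as in~\eqref{eq:enumsimple}. The univariate asymptotic follows from singularity analysis of Tutte's generating series $T(p, z) = \sum_m t(p, m) z^m$ for polygon triangulations. The factorized bivariate asymptotic, which is the genuinely new ingredient, I would derive through a canonical decomposition of an annular simple triangulation along a shortest path (or leftmost geodesic) joining the two boundary components: cutting along this curve splits the annulus into two disk triangulations whose boundary data recombines into $p_1$ and $p_2$, and a convolution-type singularity analysis of the resulting bivariate generating function produces the displayed product. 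Substitution yields $s(p_1, p_2, n - v_{H_1} - v_{H_2})/q_n \to c(p_{H_1}) c(p_{H_2})/c_0^2$, and recognising each ratio $c(p_{H_i})/c_0$ as $\Pr{U_k(\hat{\mQ}) \simeq H_i}$ completes the factorization.

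The principal obstacle is ensuring that the annular constant cleanly factors as $c(p_1)c(p_2)/c_0$ rather than some more intricate expression, which is where the geodesic-slicing argument is essential; this should go through either by adapting Krikun's peeling machinery to the simple setting or by writing a direct Tutte-style equation for annular simple triangulations and extracting its dominant singular term. The final sentence of the lemma asserting preservation of planar structure is then automatic: the hull of a corner retains the combinatorial embedding inherited from $\mQ_n$, so the isomorphisms implicit in the local topology are corner-preserving map isomorphisms and not merely abstract graph isomorphisms.
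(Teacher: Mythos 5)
Your overall strategy — reducing quenched convergence to two-point factorization, disposing of the diagonal regime by tightness of the hull size, and then showing that two far-apart $k$-hulls decouple — is a legitimate route in principle, but it is very different from the paper's and it contains a genuine gap at its crux.

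The paper does not attempt any annular enumeration. It simply invokes one of three known mechanisms: ergodicity of the type III UIPT applied to the annealed convergence of Angel--Schramm; the quenched submap-counting results of~\cite{MR2095934}; or, most concretely, the bijection of~\cite{zbMATH06812193,zbMATH05126694} that encodes simple triangulations via simply generated trees together with the quenched local limit theorem for simply generated trees from~\cite{MR2908619} and the continuous mapping theorem, exactly as carried out for unrestricted Boltzmann maps in~\cite{quenchedmap}. All three routes transfer the independence of two random roots from an already-understood object, so no new singularity analysis is needed.

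Your argument hinges on the asserted asymptotic $s(p_1, p_2, m) \sim c(p_1) c(p_2) c_0^{-1} m^{-5/2} (256/27)^m$ for simple triangulations of an annulus, and the decomposition you propose to prove it does not do what you claim. Cutting an annulus along a geodesic path from one boundary circle to the other yields a \emph{single} disk whose boundary has length $p_1 + p_2 + 2\ell$, where $\ell$ is the geodesic length — not two disks with boundaries recombining into $p_1$ and $p_2$. Cutting instead along a separating simple closed curve gives two smaller annuli, not two disks, so that does not close the recursion either. Whichever cut you make, you incur an additional sum over the cut length $\ell$ weighted by the number of ways to realize that cut, and it is precisely this extra sum that makes annular two-point functions delicate: the clean product form of the polynomial prefactor is a nontrivial universality statement that must be derived, not read off from a one-line slicing. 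Your fallback — ``adapting Krikun's peeling machinery to the simple setting or writing a direct Tutte-style equation for annular simple triangulations'' — is exactly where the work would lie, and as written the proposal does not supply it. Until that factorization is actually established for simple (type III) triangulations, the proof is incomplete; and given that the quenched statement already follows from standard results as the paper notes, developing annular enumeration from scratch is the hard road to a result that has an easy road.
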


Angel and Schramm~\cite[Thm. 1.7]{MR2013797} constructed this infinite map, called the (type III) Uniform Infinite Planar Triangulation, and proved annealed local convergence of $(\mQ_n, c_n)$ towards it. 	The convergence preserves the planar structure in the sense that for any $k \ge 1$, the $k$-neighbourhood of $c_n$ in $\mQ_n$ converges in distribution as random finite corner-rooted planar map. We may formalize this as convergence of random elements of the space $\mathfrak{M}$ of locally finite (but possibly infinite) corner-rooted planar maps equipped with the local topology. That is, the projective limit topology arising from the projections to $k$-neighbourhoods interpreted as finite corner-rooted planar maps.

The quenched convergence in Lemma~\ref{le:convsimple} refers to distributional convergence of random elements of the collection $\mathbb{M}_1(\mathfrak{M})$ of Borel probability measures on $\mathfrak{M}$. There are various ways of proving convergence of $\mathfrak{L}((\mQ_n, c_n) \mid \mQ_n)$. One way would be to build on \cite[Thm. 1.7]{MR2013797} and use the fact that the limiting map is ergodic.\footnote{The author warmly thanks Justin Salez for this comment.} Alternatively, a more combinatorial approach would be to strengthen \cite[Thm. 1.7]{MR2013797}  using quenched results for counting submaps~\cite{MR2095934}. A third option is a direct approach that proves Lemma~\ref{le:convsimple} and recovers the annealed convergence. Specifically, as described in~\cite{zbMATH06812193}, the random simple triangulation $\mQ_n$ may be generated from a simply generated tree by adding edges and two vertices according to a closure operation by~\cite{zbMATH05126694}. By the continuous mapping theorem~\cite[Thm 2.7]{MR1700749}, quenched local convergence of the simply generated tree by~\cite{MR2908619} yields quenched local convergence of $\mQ_n$. These arguments are fully analogous to the proof in~\cite{quenchedmap} for quenched local convergence of large Boltzmann planar maps, which include the case of uniform unrestricted triangulations. Adapting the arguments of~\cite{quenchedmap} in the way we described to treat simple triangulations is straight-forward and yields no new insights, hence we leave the details to the inclined reader.

\subsection{Cubic $3$-connected planar maps and graphs}
\label{sec:cupam}

The \emph{dual map} of a planar map $M$ is the  ``red'' map obtained by placing a red vertex inside of each face of $M$ and then adding for each edge $e$ of $M$ a red edge between the red vertices corresponding to the two faces adjacent to $e$. This creates loops, if the two faces are identical. If $e$ is the root edge of $M$, we orient the corresponding red edge in a canonical way. This way, the dual map has an oriented root edge as well.

Recall that a triangulation with at least $4$ vertices is simple if and only if it is $3$-connected. It is well-known that the dual map construction yields a bijection between $3$-connected triangulations and  $3$-connected cubic planar maps. By Whitney's theorem, any such map has precisely two embeddings in the plane. Let $\cM(x,y)$ denote the exponential generating function of labelled $3$-connected cubic planar graphs that are rooted at a directed edge, with $x$ marking vertices and $y$ marking edges. Using the notation from Equation~\eqref{eq:enumsimple}, it follows that
\begin{align}
	\label{eq:M}
	\cM(x,y) = \frac{1}{2} \sum_{n \ge 2} q_n x^{2n} y^{3n}.
\end{align}

The type III UIPT $\hat{\mQ}$ almost surely belongs to the subset $\mathfrak{M}_0 \subset \mathfrak{M}$  of locally finite simple triangulations. We may consider the function $\psi: \mathfrak{M} \to \mathfrak{M}$ that sends a planar map $M$ to its dual map, if the dual map lies in $\mathfrak{M}$, and to some fixed place-holder value if it doesn't. This way, $\psi$ is continuous on the subset $\mathfrak{M}_0$. It follows by the  continuous mapping theorem~\cite[Thm 2.7]{MR1700749} that for any sequence of Borel probability measures $P_1, P_2, \ldots \in \mathbb{M}_1(\mathfrak{M})$ satisfying the weak convergence $P_n \Rightarrow \mathfrak{L}(\hat{\mQ})$, the push-forward measures satisfy
\begin{align*}
	P_n \psi^{-1} \Rightarrow \mathfrak{L}(\hat{\mQ}) \psi^{-1},
\end{align*}
and $\mathfrak{L}(\hat{\mQ}) \psi^{-1} = \mathfrak{L}(\psi(\hat{\mQ}))$. Here $\Rightarrow$ denotes weak convergence of probability measures.
In other words, the function
\[
	\mathbb{M}_1(\mathfrak{M}) \to \mathbb{M}_1(\mathfrak{M}), \quad P \mapsto P \psi^{-1},
\]
that maps a measure to its push-forward along $\psi$, is continuous at the point $\mathfrak{L}(\hat{\mQ})$. Applying the continuous mapping theorem~\cite[Thm 2.7]{MR1700749}  to the distributional convergence of random probability measures in Lemma~\ref{le:convsimple}, it follows that
\begin{align*}
	\mathfrak{L}((\mQ_n, c_n) \mid \mQ_n) \psi^{-1} \convdis \mathfrak{L}(\psi(\hat{\mQ})).
\end{align*}
The push-forward of $\mathfrak{L}((\mQ_n, c_n) \mid \mQ_n) \psi^{-1}$ is the uniform measure on the $6n$-element collection of corner-rooted versions of $\psi(\mQ_n)$. We set
\begin{align}
	\hat{\mM} = \psi(\hat{\mQ}).
\end{align}
We have thus verified:

\begin{lemma}
	\label{le:conv3cubic}
	Let $\mM_n$ denote the uniformly selected $3$-connected cubic planar graph with $2n$ vertices and $3n$ edges. Let $c_n$ denote a uniformly selected corner of $\mM_n$. Then
	\begin{align*}
			\mathfrak{L}((\mM_n, c_n) \mid \mM_n) \convdis \mathfrak{L}(\hat{\mM}).
	\end{align*}
\end{lemma}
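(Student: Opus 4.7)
The plan is to derive this from Lemma~\ref{le:convsimple} by applying the continuous mapping theorem to the dual map construction $\psi$, and then to pass from corner-rooted $3$-connected cubic planar maps to corner-rooted $3$-connected cubic planar graphs using Whitney's theorem. Most of the argument has in fact been assembled in the preceding discussion; the remaining task is to package it into a clean statement.

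First I would record that $\psi$ restricts to a bijection between simple triangulations with $n+2$ vertices and $3$-connected cubic planar maps with $2n$ vertices, and that under this bijection corners correspond canonically: a corner of a planar map is naturally identified with an oriented half-edge, and half-edges of $M$ are in bijection with half-edges of $\psi(M)$ via the edge duality (each pair (primal edge, adjacent face) gives one half-edge on each side). Consequently, the push-forward under $\psi$ of the uniform law on corner-rooted versions of $\mQ_n$ is the uniform law on corner-rooted versions of $\psi(\mQ_n)$. The pushforward operation $P \mapsto P\psi^{-1}$ was already observed above to be continuous at $\mathfrak{L}(\hat{\mQ})$, since $\hat{\mQ}\in\mathfrak{M}_0$ almost surely. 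Hence the continuous mapping theorem applied to the quenched convergence of Lemma~\ref{le:convsimple} yields
\[
\mathfrak{L}\bigl((\psi(\mQ_n), c_n) \mid \mQ_n\bigr) \convdis \mathfrak{L}(\hat{\mM}),
\]
where $\hat{\mM} := \psi(\hat{\mQ})$ and we silently use the canonical corner identification.

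Finally I would pass from $\psi(\mQ_n)$ to $\mM_n$ by invoking Whitney's theorem: every $3$-connected planar graph admits exactly two embeddings in the sphere, differing by orientation reversal. Consequently, a uniform $3$-connected cubic planar map with $2n$ vertices has the same law, once one forgets the embedding, as a uniform $3$-connected cubic planar graph with $2n$ vertices; and uniformly chosen corners are matched under this identification (the two embeddings contribute the same combinatorial set of corners up to the canonical reflection). Hence $(\psi(\mQ_n), c_n)$ and $(\mM_n, c_n)$ induce the same distribution on corner-rooted graphs, and the lemma follows. The only ``hard part'' is essentially bookkeeping: verifying that the canonical half-edge bijection $M \leftrightarrow \psi(M)$ and the Whitney identification are both compatible with the local topology, so that the quenched convergence of random probability measures genuinely transports along the chain. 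Since the relevant maps are deterministic and continuous on full-measure sets under the limit law, this follows from one further application of the continuous mapping theorem.
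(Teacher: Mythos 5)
Your proposal follows essentially the same route as the paper: apply the continuous mapping theorem (with the pushforward $P\mapsto P\psi^{-1}$, continuous at $\mathfrak{L}(\hat{\mQ})$ because $\hat{\mQ}\in\mathfrak{M}_0$ a.s.) to the quenched convergence of Lemma~\ref{le:convsimple}, observe that the pushforward of $\mathfrak{L}((\mQ_n,c_n)\mid\mQ_n)$ under $\psi$ is the uniform corner-rooting of $\psi(\mQ_n)$, and then pass from corner-rooted maps to corner-rooted graphs by Whitney's theorem. The only difference is that you spell out the corner/half-edge bijection under duality and the Whitney map-to-graph identification more explicitly than the paper does; this is harmless extra bookkeeping and not a new idea.
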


\section{Quenched local convergence of connected cubic planar graphs}
\label{sec:cuplag}

\subsection{Network decomposition}
\label{sec:netdec}

A \emph{(cubic) network} is a connected planar cubic multi-graph $N$ with an oriented root edge $e$ such that the graph $N - e$ obtained by removing the root edge  is simple. The vertices of $N$, including the endpoints of $e$, are labelled. We refer to the endpoints as the \emph{poles} of the network. The exponential generating function $\cN(x)$ of the class of networks is defined so that for all $n \ge 0$ the coefficient $[x^n]\cN(x)$  equals $1/n!$ multiplied the number of networks with $n$ vertices. Of course, this coefficient equals zero unless $n \in \{2i \mid i \ge 2\}$.

The class $\cN$ of networks may be partitioned into five disjoint subclasses:
\begin{enumerate}
	\item  $\cL$ (Loop). The root edge is a loop.
	\item $\mathcal{I}$ (Isthmus). The root edge is an isthmus, meaning $N - e$ is disconnected.
	\item $\mathcal{S}$ (Series). $N - e$ is connected, but contains a bridge that separates the endpoints of $e$.
	\item $\mathcal{P}$ (Parallel). $N - e$ is connected, contains no bridge that would separate the endpoints of $e$, and either $e$ is part of a double edge in $N$ or deleting the endpoints of $e$ disconnects $N$.
	\item $\mathcal{H}$ (Polyhedral). $N$ is obtained from a $3$-connected network by possibly replacing each non-root edge with a non-isthmus network.
\end{enumerate}

It is a non-trivial fact that these are the only classes that need to be considered. We refer the reader to~\cite{zbMATH05122852, zbMATH07213288} for a detailed justification.  In the following we recall the decomposition of the individual classes following closely the presentation in these references. We will often drop the argument of the generating series, writing $\cN$ instead of $\cN(x)$. It will be notationally convenient to introduce the subclass $\cD$ such that
\begin{align}
	\label{eq:arga}
	\cN &= \cD + \cI, \\
	\label{eq:ddd}
	\cD &= \cL + \cS + \cP + \cH.
\end{align}

\subsubsection{Loop networks}

A cubic network $N$ belongs to the class $\cL$ of loop-networks if its root edge $st$ is a loop. That is, if $s=t$. As illustrated in Figure~\ref{fi:loop}, the vertex $s$ of the loop is adjacent to a single vertex $s'$, which is adjacent to two distinct vertices $u \ne v$ that form the poles of a non-loop network. There are two ways to orient this associated network, yielding
\begin{align}
	\label{eq:argb}
	\cL= \frac{x^2}{2}(\cN - \cL).
\end{align}

\begin{figure}[H]
	\centering
	\begin{minipage}{\textwidth}
		\centering
		\includegraphics[scale=0.7]{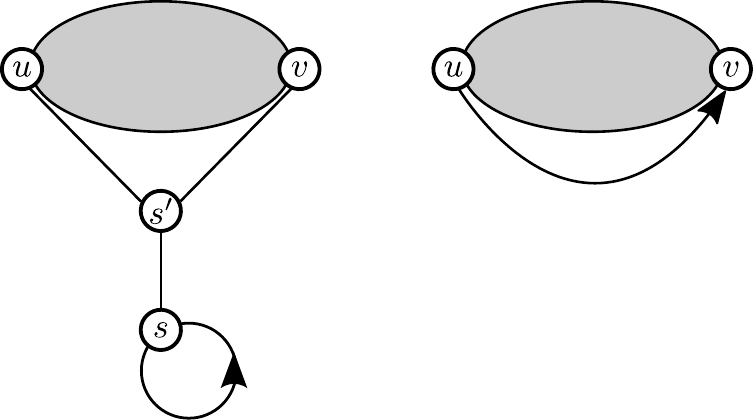}
		\caption{Decomposition of loop networks.}
		\label{fi:loop}
	\end{minipage}
\end{figure}

\subsubsection{Isthmus networks}

As illustrated in Figure~\ref{fi:isthmus}, an isthmus network corresponds to an ordered pair of loop networks, each having an additional vertex. Thus,
\begin{align}
	\label{eq:argc}
	\cI = \frac{\cL^2}{x^2}.
\end{align}

\begin{figure}[H]
	\centering
	\begin{minipage}{\textwidth}
		\centering
		\includegraphics[scale=0.7]{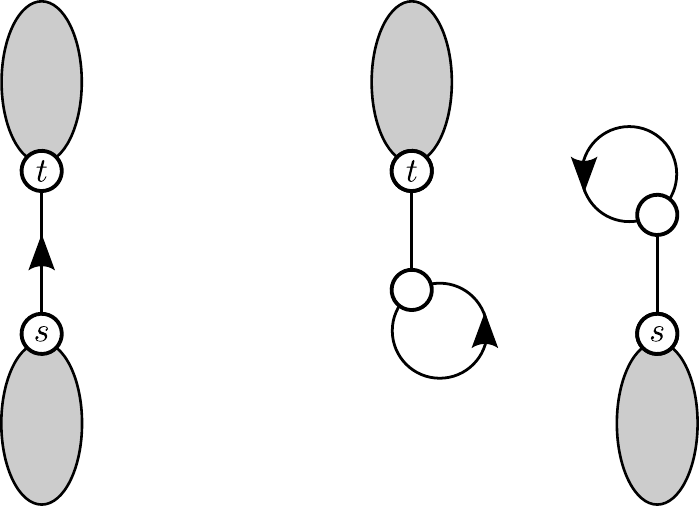}
		\caption{Decomposition of isthmus networks.}
		\label{fi:isthmus}
	\end{minipage}
\end{figure}

\subsubsection{Series networks}

If $N$ is a series network, then $N - e$ contains one or more bridges that separate the poles $s$ and $t$. Let $uv$ denote the bridge that is closest to $s$, directed from $u$ to $v$ such that $u$ is closer to $s$ than $v$. As illustrated in Figure~\ref{fi:series}, $N$ corresponds to two networks $N_1$ and $N_2$, with root edges $su$ and $vt$. If $s=u$, then $N_1$ is a loop network, and likewise if $v=t$ then $N_2$ is a loop network. Both $N_1$ and $N_2$ cannot be isthmus networks, since there are multiple paths between their poles. Since we chose the bridge $uv$ which is closest to $s$, the network $N_1$ additionally cannot be a series network. Hence,
\begin{align}
	\label{eq:S}
	\cS = (\cN- \cS - \cI)(\cN - \cI).
\end{align}
Note that the subclass of series networks whose root edge is a double edge is given by $\cL^2$.

\begin{figure}[H]
	\centering
	\begin{minipage}{\textwidth}
		\centering
		\includegraphics[scale=0.7]{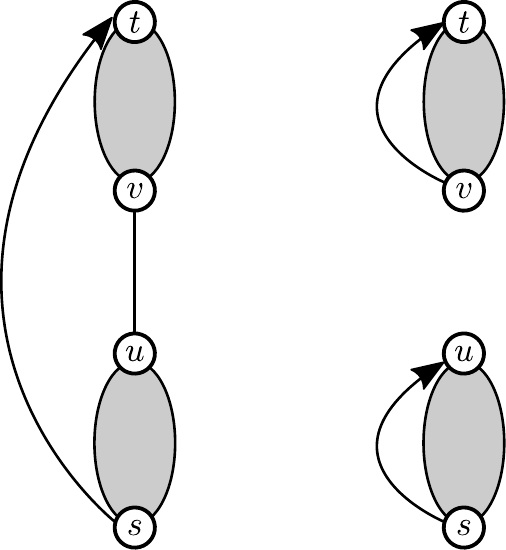}
		\caption{Decomposition of series networks.}
		\label{fi:series}
	\end{minipage}
\end{figure}

\subsubsection{Parallel networks}

There are two types of parallel networks. If the root edge is a double edge, then the poles $s$ and $t$ are adjacent to (possibly identical) vertices $u$ and $v$, as illustrated in the first row of Figure~\ref{fi:parallel}. The corresponding smaller network with poles $u$ and $v$ cannot be an isthmus network. If the root edge is not a double edge, then as illustrated in the second row of Figure~\ref{fi:parallel} the parallel network corresponds to an unordered pair of two non-isthmus networks. Hence
\begin{align}
	\label{eq:decparallel}
	\cP= x^2 \cD + \frac{x^2}{2}\cD^2.
\end{align}
The summand $x^2 \cD$ also corresponds precisely to the parallel networks whose root edge is a double edge.

\begin{figure}[H]
	\centering
	\begin{minipage}{\textwidth}
		\centering
		\includegraphics[scale=0.7]{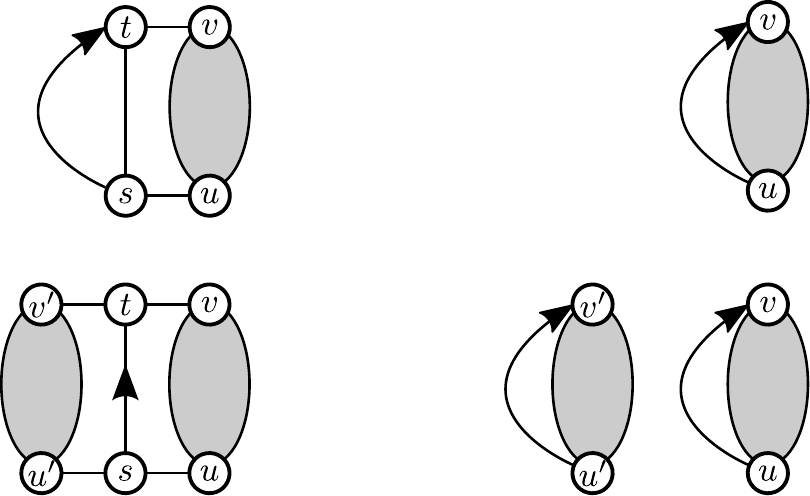}
		\caption{Decomposition of the two types of parallel networks.}
		\label{fi:parallel}
	\end{minipage}
\end{figure}

\subsubsection{Polyhedral networks}

A polyhedron network $N$ consists of a $3$-connected cubic planar graph $M$ with a directed root edge, together with components $D(1), D(2), \ldots$, one for each non-root edge of $M$. Here we choose, for each possible shape of $M$, a canonical enumeration and orientation for each non-root edge.  Each component is either an edge or a $\cD$-network. The network $N$ is obtained from the core by inserting at each canonically directed non-root edge $uv$ of the core the corresponding component $D(i)$. Here \emph{inserting} means doing nothing if the component is an edge. If the component $D(i)$ is a network, we delete the edge $uv$ from $M$, delete the root edge of $D(i)$, and insert an edge between $u$ and the south pole of $D(i)$, and another edge between $v$ and the north pole of $D(i)$. We say $M$ is the $3$-connected core of $N$.  See Figure~\ref{fi:poly} for an illustration. This entails
\begin{align}
	\label{eq:decpoly}
	\cH= \frac{\cM(x, 1 + \cD)}{1 + \cD}.
\end{align}

\begin{figure}[H]
	\centering
	\begin{minipage}{\textwidth}
		\centering
		\includegraphics[scale=0.7]{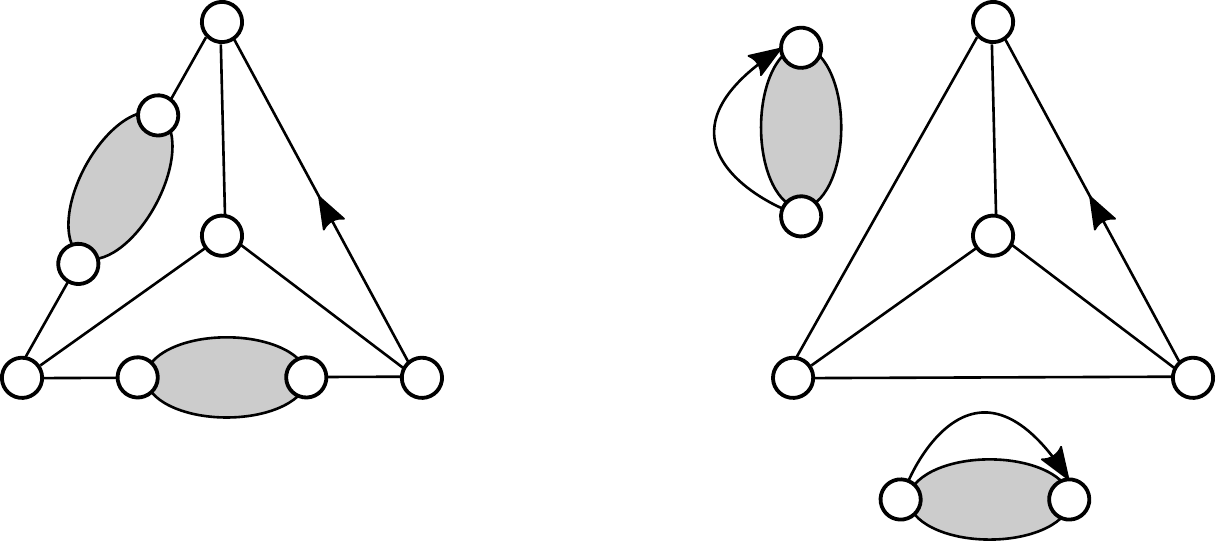}
		\caption{Decomposition of polyhedra networks.}
		\label{fi:poly}
	\end{minipage}
\end{figure}

\subsubsection{Simple networks}
	\label{simplenetworks}
	A network is simple, if it is not a loop network,  not the kind of parallel network illustrated in the top half of Figure~\ref{fi:parallel}, and not a series network whose two components are loop networks. Thus, the class $\cN_{\mathrm{s}}$ of simple networks may be decomposed as follows:
\begin{align}
	\label{eq:simpnetwork}
	\cN_{\mathrm{s}} 	&= \cH +  \frac{x^2}{2} \cD^2 + \cI + (\cS - \cL^2).
\end{align}
Simple networks are precisely connected cubic planar graphs with an oriented root edge.

\subsection{The largest $3$-connected component}
\label{sec:largest}

The present section provides a proof of Theorem~\ref{te:main2}. We are going to require singular expansions for the generating series introduced in the previous section. We recall these expansions following closely the presentation in~\cite{zbMATH07213288}.

We let $\cQ(z) = \sum_{n \ge 1} q_n z^n$ denote the generating series for simple triangulations, with $q_n$  denoting the number of simple triangulations with $n+2$ vertices, as in  Equation~\eqref{eq:enumsimple}. Tutte showed in~\cite{zbMATH03169204} that \[
\cQ(z) = U(z)(1 - 2U(z)),\] where $U(z)$ denotes a power series with non-negative coefficients satisfying the equation \[U(z)(1- U(z))^3 = z.\] Solving this equation  and setting $\tau= 27/256$ and $Z =\sqrt{1 - z/\tau}$, it follows that
\begin{align}
	\label{eq:expq}
	\cQ(z) =  \frac{1}{8}-\frac{3 Z^2}{16}+\frac{Z^3}{4 \sqrt{6}}-\frac{13
		Z^4}{192}+\frac{35 Z^5}{288 \sqrt{6}}-\frac{1201 Z^6}{31104}+ O(Z^7)
\end{align}
as $z \to \tau$. By Equation~\eqref{eq:M}, 
\[
\cM(x,y) = \frac{1}{2}( \cQ(x^2y^3) - x^2y^3 ).
\]
We also define the generating series $\bar{\cM}(x,y)$ of $3$-connected cubic planar graphs without a directed root edge, which satisfies
\[
\cM(x,y) = 2 y\frac{\partial \bar{\cM}}{\partial y} (x,y).
\]
Combining Equations~\eqref{eq:arga},~\eqref{eq:argb}, and~\eqref{eq:argc} yields
\[
	\cL = \frac{x^2}{2}(\cD + \cI - \cL) = \frac{x^2}{2}(\cD + \cL^2 / x^2 - \cL).
\]
This equation may be solved for $\cL(x)$, yielding
\begin{align}
	\label{eq:plug1}
\cL= 1 + \frac{x^2}{2} - \sqrt{\frac{x^2}{4} + 1 -x^2(\cD -1)}.
\end{align}
Equation~\eqref{eq:S} may be rewritten as
$\cS= (\cD - \cS)\cD$.  Solving for $\cS$ yields
\begin{align}
	\label{eq:plug2}
	\cS = \frac{\cD^2}{1+ \cD}.
\end{align}
Plugging~\eqref{eq:decparallel},~\eqref{eq:decpoly},~\eqref{eq:plug1} and~\eqref{eq:plug2} into Equation~\eqref{eq:ddd} yields
\begin{align}
	\label{eq:ford}
	\cD = 1 + \frac{x^2}{2} - \sqrt{\frac{x^2}{4} + 1 -x^2(\cD -1)} + \frac{\cD^2}{1+ \cD} + x^2 \cD + \frac{x^2}{2}\cD^2 +  \frac{\cM(x, 1 + \cD)}{1 + \cD}.
\end{align}
As argued in~\cite[Proof of Thm.1]{zbMATH07213288}, this recursive equation allows us to derive a singular expansion for  the exponential generating series $\cD(x)$. It has a radius of convergence $\rho>0$ that satisfies
\begin{align}
	\label{eq:rhotau}
	\rho^2(1 + \cD(\rho))^3 = \tau.
\end{align}
The two points $\{\rho, - \rho\}$ are the only singularities on the circle $\{w \in \ndC \mid |w|=\rho\}$. The series $\cD(x)$ satisfies the singular expansion
\begin{align}
	\label{eq:expd}
	\cD(x) = D_0 + D_2 (1 - x/\rho) + D_{3} (1 - x/\rho)^{3/2} + O( (1-x/\rho)^2 )
\end{align}
as $x \to \rho$, for constants $D_0 = \cD(\rho)$, $D_2 = - \rho \cD'(\rho)$, $D_3>0$. In particular, by the arguments of~\cite[Proof of Thm. 1]{zbMATH07213288}, the following numerical approximations hold
\begin{align*}
	\rho &=       0.319224606195452700761429068280\ldots, \\
	\cD(\rho) &=  0.011525944379127380775581944095\ldots, \\
	\cD'(\rho) &= 0.370296056465161996287563244273\ldots, \\
	D_3 &=  	  0.254267214080405673433969610493\ldots.
\end{align*}
The singularity expansion~\eqref{eq:expd} entails
\begin{align}
	\label{eq:dasymp}
	[x^n] \cD(x) \sim c_{\cD} n^{-5/2} \rho^{-n}, \qquad c_{\cD} = \frac{3 D_3}{2 \sqrt{\pi}}
\end{align}
as $n \in 2 \ndN$ tends to infinity. The series $\cC^\bullet(x)$ counts vertex-rooted cubic planar graphs. Any such graph has $3$ canonical choices for an oriented root edge that yields a simple network. Hence $\cC^\bullet(x)$ is related to the generating series $\cN_{\mathrm{s}}$ of simple networks from Equation~\eqref{eq:simpnetwork} via
\begin{align*}
	3 \cC^\bullet = \cN_s.
\end{align*}
Hence
\begin{align}
	\label{eq:cbulletasdf}
	3 \cC^\bullet &= \cD  + \cI - (x^2 \cD + \cL + \cL^2) \\
				&= (1 - x^2)\cD - \cL + (1/x^2 - 1) \cL^2. \nonumber
\end{align}
Plugging in~\eqref{eq:plug1} and~\eqref{eq:expd} yields 
\begin{align}
	\cC^\bullet = C_0^\bullet + C_2^\bullet(1 - x/\rho) + C_3^\bullet(1 - x/\rho)^{3/2} + O( (1 - x/\rho)^2 )
\end{align}
as $x \to \rho$, and hence
\begin{align}
	\label{eq:cbulletexp}
	n [x^n] \cC(x) = [x^n] \cC^\bullet(x) \sim \frac{3C^\bullet_3}{2\sqrt{\pi}} n^{-5/2} \rho^{-n}.
\end{align}

\begin{proof}[Proof of Thm.~\ref{te:main2}]
	
	Given a finite connected cubic planar graph $C$, we may select and orient a root edge in order to obtain a simple network. As described in Equation~\eqref{eq:simpnetwork}, this network is either a polyhedral network, a series network, an isthmus network, or a simple parallel network (that is, the summand $\frac{x^2}{2} \cD^2$ in Equation~\eqref{eq:simpnetwork}). Each of these networks again may be decomposed into smaller networks and atoms (which correspond to the vertices of $C$), as described in Section~\ref{sec:netdec}. Proceeding recursively, this decomposition algorithm terminates after a finite number of steps.  In this procedure, every time we encounter an $\cH$-network,  we decompose it into a $3$-connected cubic network with $(1 +\cD)$-components inserted at its non-root edges. The underlying $3$-connected cubic graphs corresponding to the $3$-connected cubic networks encountered in this way are the $3$-connected components of the connected cubic planar graph $C$. Their vertex sets are subsets of the vertex set of $C$. They do not depend on the choice of the oriented root edge $e$, only on $C$. Moreover, $C$ has at least one such component, since $\cH$-networks are the only networks that may be decomposed entirely into atoms.
	
	It is crucial to note that, as a direct consequence of the decomposition steps in Section~\ref{sec:netdec},  the $3$-connected components of $C$ do not overlap. That is, no two $3$-connected components share a common vertex. This  allows us to use a double rooting argument similar to that of~\cite{zbMATH01308674}:

	With foresight, we define the constant
	\begin{align}
		\label{eq:kappa}
		\kappa &= \frac{2(1 + \cD(\rho))}{2(1 + \cD(\rho)) + 3 \cD'(\rho)\rho} \\
		&= 0.850853090058314333870385348879612617197477\ldots. \nonumber
	\end{align} 
	Let $M>0$ be a constant and let $k  = \kappa n + t n^{2/3} \in 2 \ndN$ with $t \in [-M, M]$. Let $c_{n,k}$ denote the number of cubic planar graphs with $n$ vertices and a marked vertex that belongs to a $3$-connected component with $k$ vertices. Note that not every vertex needs to belong to  a $3$-connected component, but there are always some that do. Let $c_{n,k}^*$ denote the number of cubic planar graphs with an arbitrary marked vertex whose largest component has $k$ vertices. Note that, since $k > n/2$, in both cases such a component is unique.
	Hence we have
	\[
		n c_{n,k} = k c_{n,k}^*,
	\]
	because both sides of the equation represent the number of $n$-vertex cubic planar graphs with a marked vertex and a second marked vertex that belongs a $k$-sized $3$-connected component.
	
	Thus, with $c_n$ denoting the number of $n$-vertex labelled cubic planar graph, the probability for the largest $3$-connected component in $\mC_n$ to have $k$ vertices is given by
	\begin{align}
		\label{eq:tosimplify}
		\Pr{V_n = k} = \frac{c_{n,k}^*}{n c_n} = \frac{n}{k} \frac{c_{n,k}}{n c_n}.
	\end{align}
	By choice of $k$, we have
	\begin{align}
		\frac{n}{k} \sim \frac{1}{\kappa}.
	\end{align}
	By~\eqref{eq:cbulletexp} we have
	\[
		\frac{n c_n}{n!} \sim \frac{3C^\bullet_3}{2\sqrt{\pi}} n^{-5/2} \rho^{-n}.
	\]
	Plugging \eqref{eq:plug1} into \eqref{eq:cbulletasdf} and using the singular expansion~\eqref{eq:expd} for $\cD(x)$ yields
	\begin{align}
		\label{eq:cbullet3}
		C^\bullet_3 = \frac{-D_3 \rho^4-2 D_3 \rho^2+2 D_3}{3
			\sqrt{-4 D_0 \rho^2+\rho^4+4 \rho^2+4}}.
	\end{align}
	Furthermore,
	\begin{align*}
		\frac{c_{n,k}}{n!} &= k [x^n u^{3k/2}] \bar{\cM}( x, u(1 + \cD(x)) ).
	\end{align*}
	A $3$-connected cubic planar graph with $3k/2$ edges has $3k$ versions with an oriented root edge. Hence
	\begin{align*}
		k [x^n u^{3k/2}] \bar{\cM}( x, u(1 + \cD(x)) ) &= \frac{1}{3} [x^n u^{3k/2}] \cM( x, u(1 + \cD(x)) ) \\&= \frac{1}{6}[x^n u^{3k/2}] \cQ^+\left(x^2u^3(1 + \cD(x))^3\right) \\
		&= \frac{1}{6}[x^{n/2} u^{k/2}] \cQ^+\left(xu(1 + \cD(\sqrt{x}))^3\right),
	\end{align*}
with $\cQ^+(z) := \cQ(z) - z$. The singular expansions~\eqref{eq:expq} and~\eqref{eq:expd} entail
\begin{align*}
	\cQ^+ = Q^+_0 + Q^+_2(1 - x/\tau) + Q^+_3(1 - x/\tau)^{3/2} + O( (1 - x/\tau)^2 )
\end{align*}
as $z \to \tau$, with
\begin{align*}
	Q^+_0 =  \frac{5}{256}, \quad Q^+_2 = \frac{21}{256}, \quad Q^+_3 =  \frac{1}{4 \sqrt{6}},
\end{align*}
and, using~\eqref{eq:rhotau}, 
\begin{align*}
	z(1 +\cD(\sqrt{z}))^3 = A_0 + A_2 (1 - z / \rho^2) + A_3 (1 - z / \rho^2)^3
\end{align*}
as $z \to \rho^2$, with
\begin{align*}
	A_0 = \tau, \quad A_2 =  \left(\frac{3D_2}{2(D_0+1)}   - 1  \right) \tau, \quad A_3 =  \frac{3  D_3 \tau }{2 \sqrt{2} (D_0 + 1)} .
\end{align*}
This allows us to apply a general result for asymptotics of  composition schemes~\cite[Thm. 5, (ii)]{MR1871555}.  Applying this result and using $k/2 = \kappa n/2 + t/2$ it follows that
\begin{align*}
	[x^{n/2} u^{k/2}] \cQ^+\left(xu(1 + \cD(\sqrt{x}))^3\right) \sim \frac{3}{4 \sqrt{\pi n^5 / 2^5}} \rho^{-n}  \alpha_0^{-3/2} Q_3^+ c2^{1/3} h(c t/2^{1/3})
\end{align*}
with
\begin{align*}
	\alpha_0 = \frac{\tau}{-A_2} = \kappa
\end{align*}
and 
\begin{align*}
	c = \frac{1}{\kappa} \left( \frac{-A_2}{3A_3} \right)^{2/3} =  \frac{D_3 \left(\frac{-2 D_0+3
			D_2-2}{D_3}\right)^{5/3}}{3\ 2^{2/3} \sqrt[3]{3}
		(D_0+1)}.
\end{align*}
Thus, Equation~\eqref{eq:tosimplify} simplifies to 
\begin{align}
	\Pr{V_n = k} \sim C c 2^{-1/3} h(ct 2^{-1/3}) n^{-2/3}
\end{align}
with
\begin{align}
	\label{eq:evalthec}
	C &= \frac{1}{6}  \frac{Q_3^+}{C_3^\bullet} \kappa^{-5/2} 2^{5/2}  \\
	&= \frac{\sqrt{-4 D_0 \rho^2+\rho^4+4
			\rho^2+4}}{8 \sqrt{6} \left(\frac{D_0+1}{2 D_0-3
			D_2+2}\right)^{5/2} \left(-D_3 \rho^4-2 D_3
		\rho^2+2 D_3\right)}. \nonumber
\end{align}
This constant may be  evaluated algebraically to equal
\[
	C = 2.
\]
Indeed, Equation~\eqref{eq:ford} implies
\begin{align}
	\label{eq:tosolve}
	(1+\cD(x))\sqrt{x^4/4 + 1 - x^2(\cD(x)-1)} - 1 - (1/2)\cQ(x^2(1+\cD(x))^3) = 0.
\end{align}
Plugging $\cQ(\rho^2(1 + \cD(\rho))^3) = 1/8$ into~\eqref{eq:tosolve} and using Mathematica to solve the result simultaneously with $\rho^2(1 + \cD(\rho))^3 = \tau$ for $(\rho, \cD(\rho))$ yields algebraic expressions for these two constants. Differentiating~\eqref{eq:tosolve} and solving for $\cD'(\rho)$ yields an algebraic expression of $\cD'(\rho)$ in terms of $\rho$ and $\cD(\rho)$. Furthermore, we may combine~\eqref{eq:expq} with~\eqref{eq:expd} to calculate the Puiseux expansion of the left hand side of ~\eqref{eq:tosolve}. The coefficient of $(1 - x/\rho)^3$ needs to equal zero, allowing us to solve for $D_3$ to obtain an algebraic expression of $D_3$ in terms of $\rho$ and $\cD(\rho)$. Plugging these algebraic expressions into~\eqref{eq:evalthec} and simplifying the result with Mathematica's \texttt{FullSimplify} function yields $C=2$.

This completes the proof of Theorem~\ref{te:main2} with $\kappa$ given in Equation~\eqref{eq:kappa} and
\begin{align}
	\label{eq:cvconst}
		c_v &= \frac{D_3 \left(\frac{2 D_0-3
				D_2+2}{D_3}\right)^{5/3}}{6 \sqrt[3]{3}
			(D_0+1)} \\
		&= 1.205660773457703954344217302817493214574105705427\ldots \nonumber.
\end{align}
\end{proof}

\subsection{The components attached to the $3$-connected core}
\label{sec:components}

We let $Y \ge 0$ denote a random non-negative integer with probability generating function \begin{align}
	\label{eq:noway}
	\Ex{z^{Y}} = \frac{1 + \cD(\rho z)}{1 + \cD(\rho)}.
\end{align}
Note that
\begin{align}
	\label{eq:exy}
	\Ex{Y} = 	\frac{2}{3}\left(\frac{1}{\kappa} -1\right).
\end{align}

We also  define the corresponding network, which will play a major role in determining the asymptotic shape of the random connected cubic planar graph $\mC_n$.
\begin{definition}[Boltzmann  network]
	\label{de:defd}
	Let $\mD$ denote a random $(1+\cD)$-structure that is uniformly selected among all $Y$-sized $(1+\cD)$-structures.
	
	That is, if $Y>0$, then $\mD$ is a random non-isthmus network with $Y$ vertices. If $Y=0$, $\mD$ is equal to  a place-holder value representing that inserting $\mD$ at an edge of another network  has no effect. In other words, it leaves the network unchanged.
\end{definition}

By Equation~\eqref{eq:dasymp} it follows that
\begin{align}
	\label{eq:dtail}
	\Pr{Y=n} \sim \frac{c_{\cD}}{1 + \cD(\rho)} n^{-5/2}
\end{align}
as $n \in 2 \ndN$ tends to infinity.

Let $(Y_i)_{i \ge 1}$ denote independent copies of $Y$. Let $X_n = 3V_n /2$ denote the number of edges of the $3$-connected core of $\mC_n$. We let $ (Z_i)_{1 \le i \le X_n}$ denote the numbers of vertices of the components $(\cD_i(\mC_n))_{1 \le i \le X_n}$ of $\mC_n$. Let $k \ge 1$ be an integer. Conditional on having a $3$-connected core with  $X_n = 3k$ edges, the component sizes follow the distribution of the conditioned vector
\begin{align}
	\label{eq:zy}
	\left( (Z_i)_{1 \le i \le 3k} \mid X_n = 3k \right) \eqdist \left( (Y_i)_{1 \le i \le 3k} \Big\vert \sum_{i=1}^{3k} Y_i = n-2k \right).
\end{align}

With this notation at hand, we are ready to  establish a contiguity relation between $\mC_n$ and a model where all but a small number of components attached to the $3$-connected core of $\mC_n$ are replaced by independent copies of the Boltzmann random network $\mD$.

\begin{proof}[Proof of Theorem~\ref{te:main3}]
	By Equation~\eqref{eq:dtail} the random variable $Y/2$ follows asymptotically a power law with index $k^{-5/2}$. Hence the classical local limit theorem~\cite[Thm. 4.2.1]{MR0322926} entails that there exists a constant $c_1>0$ such that
	\begin{align}
		\label{eq:llty}
		\Prb{\frac{1}{2} \sum_{i=1}^k  Y_i = \frac{k}{2}\Exb{Y} + t k^{2/3}} = \frac{1}{k^{2/3}} \left( o(1) + c_1 h(c_1 t) \right)
	\end{align}
	as $k \to \infty$, uniformly for all $t$ such that $\frac{k}{2}\Exb{Y} + t k^{2/3} \in \ndN$. The density function $h$ is positive, uniformly continuous and bounded on $\ndR$.

	For all $M_1, M_2>0$ we define the collection $\cE_{n, \delta, M_1,M_2}$ of finite sequences \[
	E = (3k, y_1, \ldots, y_{3k - \lfloor \delta n \rfloor })
	\]  with  integers $k, y_1, y_2, \ldots \ge 0$ satisfying the following properties:
	\begin{enumerate}[\qquad a)]
		\item $\Pr{X = 3k}>0$  and $\Pr{Y=y_i} >0$ for all  $1 \le i \le 3k - \lfloor \delta n \rfloor $,
		\item $|\sum_{i=1}^{3k - \lfloor \delta n \rfloor} y_i - (3k - \delta n) \Ex{Y} | \le  M_1 n^{2/3}$,
		\item $|k(3 \Ex{Y} + 2) - n| \le   M_2  n^{2/3}$.
	\end{enumerate}
	For any such sequence $E$ we set $\ell = \sum_{i=1}^{3k - \lfloor \delta n \rfloor } y_i$.  By Equation~\eqref{eq:zy}, it holds that
	\begin{align}
		\label{eq:quotient}
		\frac{\Prb{ (X_n, (Z_i)_{1\le i \le  X_n - \lfloor \delta n \rfloor })  = E } } {\Prb{ (X_n, (Y_i)_{1\le i \le  X_n - \lfloor \delta n \rfloor })  = E  }}		
		&= \frac{\Prb{ \ell + \sum_{i=3k-\lfloor \delta n\rfloor +1}^{3k} Y_i = n-2k }}{ \Prb{\sum_{i=1}^{3k}  Y_i = n - 2k}  } \\
		&=  \frac{\Prb{ \sum_{i=1}^{\lfloor \delta n\rfloor } Y_i = n-2k-\ell }}{ \Prb{\sum_{i=1}^{3k}  Y_i = n - 2k}  }. \nonumber
	\end{align}
Here we have  implicitly used Assumption~a), which  ensures that we don't divide by zero.
	By Equation~\eqref{eq:llty}, it follows that
	\begin{multline*}
		\frac{\Prb{ \sum_{i=1}^{\lfloor \delta n\rfloor } Y_i = n-2k-\ell }}{ \Prb{\sum_{i=1}^{3k}  Y_i = n - 2k} } = \\  \left( \frac{3k}{\lfloor \delta n \rfloor } \right)^{2/3} \frac{o(1) + c_1 h \left( \frac{c_1}{2(\lfloor \delta n\rfloor)^{2/3} }(n - 2k - \ell - \lfloor \delta n \rfloor \Ex{Y} )   \right)}{o(1) + c_1 h \left(\frac{c_1}{2(3k)^{2/3}}(n - 2k - 3k\Ex{Y})  \right)}
	\end{multline*}
	with uniform $o(1)$ terms.
	Assumptions~b) and c) entail that the arguments of the density $h$ in both the numerator and denominator lie in a compact interval whose upper and lower bound only depend on $M_1$, $M_2$, and $\delta$. Since the density $h$ is positive,  continuous, and bounded on $\ndR$, it follows that the quotient also belongs to a compact interval whose bounds only depend on $M_1$, $M_2$, and $\delta$. We emphasize that the upper bound, let us denote it by $U(\delta, M_2)$, 
	in fact only depends on $\delta$ and $M_2$, but not on $M_1$. This verifies the existence of constants $0<c<C$ such that Inequality~\eqref{eq:N2} holds uniformly for all $E \in \cE_n := \cE_{n, \delta, M_1,M_2}$.
	
	The limit theorems~\eqref{eq:vclt},~\eqref{eq:llty}, and the expression~\eqref{eq:exy} for $\Ex{Y}$ 
	  readily imply that, given $\epsilon_1 >0$,  we may choose $M_1> 0$ large enough so that
	\[
	\Prb{ \left|\sum_{i=1}^{3X_n - \lfloor \delta n \rfloor} Y_i - (3X_n - \delta n) \Ex{Y} \right| >  M_1 n^{2/3}} < \epsilon_1,
	\]
	and given $\epsilon_2>0$ we may choose $M_2>0$ large enough so that
	\[
	\Prb{|X_n(3 \Ex{Y} + 2) - n| >  M_2  n^{2/3}} < \epsilon_2.
	\]
	Furthermore, it follows by the bounds we have shown so far that
	\[
	\Prb{ \left|\sum_{i=1}^{3X_n - \lfloor \delta n \rfloor} Z_i - (3X_n - \delta n) \Ex{Y} \right| >  M_1 n^{2/3}} < U(\delta, M_2) \epsilon_1.
	\]
	Hence, choosing $M_2$ for $\epsilon_2 = \epsilon/2$, and then $M_1$ for $\epsilon_1 = \max(\epsilon / 2, \epsilon/ (2 U(\delta, M_2)))$, it follows that~\eqref{eq:doa} and~\eqref{eq:dob} hold. This completes the proof.	
\end{proof}

We let $\mD_n$ denote the uniform $\cD$-network with $n \in 2\ndN$ vertices. That is, $\mD_n$ is distributed like  $\mD$ conditioned on having $n \ge 1$ vertices.  We let $V_n^{\cD}$ denote the number of vertices of its largest $3$-connected component. This way, we may view $\mD_n$ as the result of blowing up the $3V_n^\cD/2$ edges of its $3$-connected core $\cM(\mD_n)$ by non-isthmus networks $(\cD_i(\mD_n))_{1 \le i \le 3 V_n/2}$. Here we may choose any canonical order of the components, and hence may assume that the root edge of $\mD_n$ always belongs to $\cD_{3 V_n^\cD/2}(\mD_n)$. Thus, $\cD_{3 V_n^\cD/2}(\mD_n)$ either is empty, meaning the root edge of $\mD_n$ coincides with the root edge of the $3$-connected core, or it has an inner and an outer oriented root edge, both of which may be double edges, none of which may be isthmuses. We let $\cD^*(x)$ denote the generating series of such networks.

\begin{proposition}
	\label{pro:doit}
	The statements of Theorem~\ref{te:main2} and Theorem~\ref{te:main3} also hold for $\mD_n$ instead of $\mC_n$.
\end{proposition}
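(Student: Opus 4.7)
The plan is to transpose the proofs of Theorem~\ref{te:main2} and Theorem~\ref{te:main3} nearly verbatim to $\mD_n$, using the fact that the underlying asymptotic tools — the singular expansions~\eqref{eq:expq} and~\eqref{eq:expd} of $\cQ$ and $\cD$, the composition-scheme result \cite[Thm.~5, (ii)]{MR1871555}, and the local limit theorem for iid copies of $Y$ — are intrinsic to the series $\cD, \cL, \cM, \cQ$ and do not refer to the specific random object. For the analogue of Theorem~\ref{te:main2}, I would first set up the double rooting exactly as in the proof of Theorem~\ref{te:main2}: let $d_{n,k}$ count $\cD$-networks of size $n$ with a distinguished vertex lying in a $3$-connected component of size $k$, and $d_{n,k}^*$ count $\cD$-networks with an arbitrary distinguished vertex and largest $3$-connected component of size $k$. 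The non-overlap of $3$-connected components together with $\kappa>1/2$ yields uniqueness of the giant component for $k > n/2$ and hence $n d_{n,k} = k d_{n,k}^*$, so that
\[
\Prb{V_n^{\cD} = k} = \frac{d_{n,k}}{k \cdot n![x^n]\cD(x)}.
\]

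Next, one derives a formula for $d_{n,k}/n!$ analogous to the identity $c_{n,k}/n! = k[x^n u^{3k/2}]\bar{\cM}(x, u(1+\cD(x)))$. Iterating the decomposition $\cD = \cL + \cS + \cP + \cH$ of Section~\ref{sec:netdec} down to the unique $\cH$-sub-network whose $3$-connected core is the marked component, and then using $\cH = \cM(x,1+\cD)/(1+\cD)$ to expose that core together with its $(1+\cD)$-edge decorations, one arrives at an expression of the shape
\[
\frac{d_{n,k}}{n!} = k\,[x^n u^{3k/2}]\Bigl(\bar{\cM}(x, u(1+\cD(x))) \cdot \cG(x)\Bigr),
\]
where the auxiliary factor $\cG(x)$ — a rational expression in $\cD(x), \cL(x)$, and $x$, analogous in spirit to the combination $3\cC^\bullet = (1-x^2)\cD - \cL + (1/x^2-1)\cL^2$ from~\eqref{eq:cbulletasdf} — encodes the outer $\cL,\cS,\cP$-scaffolding around the $\cH$-sub-network carrying the marked core together with the placement of the oriented root edge of $N$. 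The crucial observations are that $\cG(x)$ carries no $u$-dependence and that its singular expansion at $\rho$ is of the same square-root type $(1-x/\rho)^{3/2}$ as $\cD$ itself with non-zero constant Taylor part, so that the product $\bar{\cM}(x,u(1+\cD))\cdot\cG(x)$ retains a dominant singular contribution of order $(1-x/\rho)^{3/2}$ at the critical point $\rho^2(1+\cD(\rho))^3 = \tau$, while the $u$-dependence driving the Airy density is left unchanged. Applying \cite[Thm.~5, (ii)]{MR1871555} exactly as in the proof of Theorem~\ref{te:main2} then yields
\[
\Prb{V_n^{\cD} = k} \sim C'\, c_v\, h(c_v t)\, n^{-2/3}
\]
uniformly for $k = \kappa n + tn^{2/3} \in 2\ndN$ and $t$ in compact intervals, with the \emph{same} constants $\kappa$ and $c_v$ (since the coefficients $A_0, A_2, A_3, Q_3^+$ and the critical ratio $\alpha_0$ are all unchanged). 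The constant $C'$ is then pinned down by the normalisation $\sum_k \Prb{V_n^{\cD} = k} = 1$ combined with the spacing~$2$ of admissible $k$-values and the density property of $c_v h(c_v \cdot)$: a Riemann-sum argument forces $C' = 2$, matching Theorem~\ref{te:main2}.

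The analogue of Theorem~\ref{te:main3} then follows without additional work: the proof in Section~\ref{sec:components} uses $\mC_n$ only through (i) the Airy-type concentration of the core-edge count $X_n = 3V_n/2$ at $3\kappa n/2$ and (ii) the identity~\eqref{eq:zy} expressing the attached component sizes as iid copies of $Y$ conditioned on their total; both transfer to $\mD_n$ by the analogue of Theorem~\ref{te:main2} just established and because the substitution $(1+\cD)$ at each core edge is intrinsic to the $\cH$-decomposition, unaffected by whether the ambient object is $\mC_n$ or $\mD_n$. The local-limit-theorem chain~\eqref{eq:quotient}--\eqref{eq:llty} then goes through unchanged. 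The main obstacle of the argument is the control of the auxiliary factor $\cG$ in the middle paragraph: although $\cG$ is itself singular of order $(1-x/\rho)^{3/2}$, the presence of non-zero constant Taylor content in both factors ensures that $\bar{\cM}(x,u(1+\cD))\cdot\cG(x)$ does not upgrade to an $(1-x/\rho)^3$-singularity, so the $n^{-5/2}$ coefficient scaling and the Airy density form survive, and any additional contribution from $\cG$ is absorbed into the overall multiplicative constant $C'$ that normalisation fixes at $2$.
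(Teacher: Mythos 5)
Your overall strategy mirrors the paper's: both approaches write the bivariate generating function for a marked core inside a $\cD$-network as the \emph{same} $u$-dependent composition scheme appearing in the proof of Theorem~\ref{te:main2}, multiplied by a $u$-free auxiliary factor, and then argue that this factor does not alter $\kappa$ or the Airy density but only the overall constant, which normalisation pins at~$2$. That normalisation shortcut is actually neater than the paper's, which re-evaluates the constant algebraically.

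However, there is a concrete gap in how you describe the auxiliary factor, and it is precisely the point the paper singles out as ``the only difference.'' The decoration $\cD_{3V_n^\cD/2}(\mD_n)$ that contains the root edge of $\mD_n$ is \emph{not} a $(1+\cD)$-structure: it carries two oriented root edges (the inner one, inherited from $\mD_n$, and the outer one, where it is plugged into the core), so it lives in a new class $\cD^*$ of bi-rooted networks. Hence the correct scheme is $\frac{\cM(x,u(1+\cD))}{1+\cD}(1+\cD^*)$, not $\bar{\cM}(x,u(1+\cD))\cdot\cG(x)$ with $\cG$ ``a rational expression in $\cD,\cL,x$.'' In particular, because $\cD^*$ is essentially an edge-marked $\cD$, its coefficients scale like $n^{-3/2}\rho^{-n}$ and the factor $(1+\cD^*)/(1+\cD)$ has a $(1-x/\rho)^{1/2}$ singularity, not the $(1-x/\rho)^{3/2}$ singularity you attribute to $\cG$. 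Your argument that the convolution with $\cG$ does not wash out the Airy fluctuations still goes through (all that is used is finiteness of $\cG(\rho)$), so the error in the claimed exponent is not fatal, but the description of $\cG$ and its singular type are both wrong. A second, smaller gap: you assert that identity~\eqref{eq:zy} transfers unchanged to $\mD_n$, but it does not --- exactly one of the $3k$ component sizes must be replaced by an independent $Y^\cD$ with generating function $(1+\cD^*(\rho z))/(1+\cD^*(\rho))$, corresponding to the bi-rooted decoration. This single replacement is indeed harmless for the local limit theorem, but the claim that~\eqref{eq:zy} is ``unaffected'' elides the modification rather than justifying its negligibility.
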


The proof of Proposition~\ref{pro:doit} is almost identical to that of Theorem~\ref{te:main2} and Theorem~\ref{te:main3}, hence we omit the details. The only difference is that the component $\cD_{3 V_n^\cD/2}(\mD_n)$ needs a special treatment because it contains two roots. Hence, instead of the composition scheme $\bar{\cM}( x, u(1 + \cD(x)) )$, we have to use  the composition scheme
\begin{align}
	\frac{\cM( x, u(1 + \cD(x)) )}{1 + \cD(x)} (1 + \cD^*(x)).
\end{align}
Likewise, instead of Equation~\eqref{eq:zy}, the distribution of the numbers of vertices $(Z_i^\cD)_{1 \le i \le 3k}$ of the networks attached to the $3$-connected core of $\mD_n$ conditional on the event, that the number $X_n^\cD$ of edges in the core equals a given integer $3k$, is given by
\begin{align}
	\left( (Z_i)_{1 \le i \le 3k} \mid X_n^\cD = 3k \right) \eqdist \left( (Y_1, \ldots, Y_{3k-1}, Y^\cD) \Big\vert \sum_{i=1}^{3k-1} Y_i + Y^\cD = n-2k \right),
\end{align}
with $Y^{\cD}$ an independent random integer with probability generating function
\begin{align}
	\Ex{z^{Y^\cD}} = \frac{1 + \cD^*(\rho z)}{1 + \cD^*(\rho)}.
\end{align}
Apart from that, the arguments of Theorem~\ref{te:main2} and Theorem~\ref{te:main3} may be copied almost word by word.

We may now finalise the proof of  Corollary~\ref{co:second} by justifying the stochastic lower bound in~\eqref{co:second}.

\begin{proof}[Proof of Corollary~\ref{co:second}]
	We already explained in the introduction how Equation~\eqref{eq:uppera} follows from Theorem~\ref{te:main3}. As each non-maximal $3$-connected component must be part of one of the components $\cD_i(\mC_n)$, $1 \le i \le 3 V_n /2$, this  readily yields
	\[
		V_n^{(2)}  = O_p(n^{2/3}).
	\]
	 As for the lower bound, Proposition~\ref{pro:doit} implies that the number of vertices $V_n^\cD$ in the largest $3$-connected component of $\mD_n$ satisfies
	\begin{align}
		\label{eq:concentrationvnd}
		V_n^\cD / n \convp \kappa.
	\end{align}
	Let $(t_n)_{n \ge 1}$ denote an arbitrary sequence satisfying $t_n \to \infty$. By Equation~\eqref{eq:dtail} and standard extremal value statistics the largest component in $n$ independent copies of $\mD$ is larger than $n^{2/3}/t_n$ with probability tending to $1$ as $n$ tends to infinity. Hence, by~\eqref{eq:concentrationvnd}, the largest $3$-connected core in these $n$ samples is larger than $(n^{2/3} / t_n)\kappa /2$ with probability tending to $1$. By Theorem~\ref{te:main3} it follows that
	\[
	\Pr{V_n^{(2)} \ge ((\kappa n /2)^{2/3} / t_n)\kappa /2} \to 1
	\]
	as $n \in 2\ndN$ tends to infinity.
	Since $t_n$ was arbitrary, Equation~\eqref{eq:lowerb} follows, and the proof is complete.
\end{proof}

We conclude this section with some remarks on Theorem~\ref{te:main3}. As mentioned in the introduction, the bounds $c$ and $C$ in Inequality~\eqref{eq:N2}  become worse if $\delta \to 0$. Moreover,  a stronger approximation of $(X_n, (\cD_i(\mC_n))_{1\le i \le 3V_n/2 - \lfloor \delta n \rfloor })$ in total variation does \emph{not} hold, because the mass of a linear number of components places a bias on the mass of the $3$-connected core. However, the proof of Theorem~\ref{te:main3} may  be modified to show that a small $o(n)$ number of components does become independent:

\begin{proposition}
	For any sequence of integer $m_n = o(n)$ we have
\begin{align}
	\label{eq:smallindep}
	d_{\mathrm{TV}}\left( (X_n, (\cD_i(\mC_n))_{1\le i \le \min(m_n, V_n) }), (X_n, (\mD(i))_{1\le i \le m_n} ) \right) \to 0.
\end{align}
\end{proposition}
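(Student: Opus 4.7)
The plan is to adapt the proof of Theorem~\ref{te:main3} while exploiting the fact that a sum of $m_n = o(n)$ independent copies of $Y$ has fluctuations of order $m_n^{2/3} = o(n^{2/3})$, a strictly smaller scale than that at which the local limit theorem~\eqref{eq:llty} operates. Since conditional on the $3$-connected core and the component sizes the components are uniformly distributed (in both models), the total variation distance between the two random pairs in~\eqref{eq:smallindep} is bounded by the one between the corresponding pairs of sizes $(X_n, (Z_i)_{i \le \min(m_n, V_n)})$ and $(X_n, (Y_i)_{i \le m_n})$, where $Z_i = |\cD_i(\mC_n)|$. Moreover $\Pr{V_n \ge m_n} \to 1$ by Theorem~\ref{te:main2}, so modulo an $o(1)$ error I may assume $\min(m_n, V_n) = m_n$ and work with the laws $P_n$ and $Q_n$ of the two length-$m_n$ pairs. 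Repeating the conditioning argument of~\eqref{eq:quotient} verbatim, the likelihood ratio at $E = (3k, y_1, \ldots, y_{m_n})$ with $\ell := \sum_{i=1}^{m_n} y_i$ equals
\[
\frac{P_n(E)}{Q_n(E)} = \frac{\Prb{\sum_{i=1}^{3k - m_n} Y_i = n - 2k - \ell}}{\Prb{\sum_{i=1}^{3k} Y_i = n - 2k}}.
\]

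I define $\cE_n$ as the sequences $E$ with $|k - \kappa n/2| \le M n^{2/3}$ and $|\ell - m_n \Ex{Y}| \le g(n)$, where $M$ is a large constant and $g(n)$ is chosen so that $g(n) = o(n^{2/3})$ and $g(n)/(1 + m_n^{2/3}) \to \infty$; both conditions can be secured simultaneously precisely because $m_n = o(n)$, for instance by taking $g(n) = (nm_n)^{1/3} + \log n$. Applying~\eqref{eq:llty} to both numerator and denominator and using the identity $3\Ex{Y} + 2 = 2/\kappa$ (from~\eqref{eq:exy}) to see that $t := (n - 2k - 3k \Ex{Y})/(2(3k)^{2/3})$ lies in a compact interval determined by $M$, I obtain
\[
\frac{P_n(E)}{Q_n(E)} = \left(\frac{3k}{3k - m_n}\right)^{2/3} \cdot \frac{o(1) + c_1 h(c_1 t')}{o(1) + c_1 h(c_1 t)},
\]
where $t' := (n - 2k - \ell - (3k - m_n)\Ex{Y})/(2(3k - m_n)^{2/3})$ satisfies
\[
t' - t = t\left[\left(\frac{3k}{3k-m_n}\right)^{2/3} - 1\right] + \frac{m_n \Ex{Y} - \ell}{2(3k - m_n)^{2/3}} = o(1)
\]
uniformly on $\cE_n$. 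Since $h$ is positive, bounded, and uniformly continuous on $\ndR$, hence bounded below by a positive constant on the compact range of $t$, it follows that $P_n(E)/Q_n(E) = 1 + \delta_n$ uniformly on $\cE_n$, with $\delta_n \to 0$.

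To conclude, I bound $Q_n(\cE_n^c)$: by independence of $(Y_i)_{i \le m_n}$ from $X_n$,
\[
Q_n(\cE_n^c) \le \Prb{|X_n - 3 \kappa n / 2| > M n^{2/3}} + \Prb{\left|\sum_{i=1}^{m_n} Y_i - m_n \Ex{Y}\right| > g(n)}.
\]
The first term is made arbitrarily small by choosing $M$ large (Theorem~\ref{te:main2}); the second tends to zero either by applying~\eqref{eq:llty} with $k$ replaced by $m_n$ (when $m_n \to \infty$) or trivially (when $m_n$ is bounded), since $g(n) \to \infty$. The uniform ratio estimate forces $P_n(\cE_n) \ge (1 - \delta_n) Q_n(\cE_n) \to 1$, hence $P_n(\cE_n^c) \to 0$, and the standard splitting of the TV sum yields
\[
d_{\mathrm{TV}}(P_n, Q_n) \le \frac{\delta_n}{2} + \frac{P_n(\cE_n^c) + Q_n(\cE_n^c)}{2} \to 0.
\]
The main delicate point is the joint choice of $g(n)$: the hypothesis $m_n = o(n)$ is used precisely to squeeze $g(n)$ between the two scales $m_n^{2/3}$ and $n^{2/3}$, in full agreement with the authors' remark that the result would fail for $m_n$ of linear order.
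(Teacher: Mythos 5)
Your proof is correct and follows essentially the same route as the paper: reduce to the size vectors, write the likelihood ratio via the two-point local limit theorem, and show it is uniformly close to $1$ on a high-probability event whose complement is small in both measures.

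One point deserves emphasis, because it is the crux that you handle more carefully than the paper's written proof. The paper defines the set $\cE_{n,M_1,M_2}$ with condition b) reading $|\sum_{i=1}^{m_n} y_i - m_n\Ex{Y}| \le M n^{2/3}$, a carryover from the proof of Theorem~\ref{te:main3} where there are $\Theta(n)$ summands and $n^{2/3}$ is the natural scale. Here, however, there are only $m_n = o(n)$ summands, and with the permissive scale $Mn^{2/3}$ the shifted argument $t'$ in the LLT can differ from $t$ by a constant (take $\ell - m_n\Ex{Y} \asymp n^{2/3}$), so the ratio in~\eqref{eq:quotient22} does \emph{not} tend to $1$ uniformly on that set. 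Your choice of an intermediate scale $g(n)$ with $m_n^{2/3} \ll g(n) \ll n^{2/3}$ (e.g.\ $(nm_n)^{1/3} + \log n$) is exactly what is needed: it keeps $Q_n(\cE_n^c)$ small because the fluctuations of $\sum_{i\le m_n} Y_i$ live at scale $m_n^{2/3}$, while forcing $t'-t = o(1)$ uniformly so the ratio genuinely converges to $1$. The remaining steps (handling $\min(m_n, V_n)$ via $\Pr{V_n \ge m_n} \to 1$, the case distinction between $m_n$ bounded and $m_n\to\infty$, and the standard TV splitting over $\cE_n$ and $\cE_n^c$) are all sound, though in the final step it is cleaner to observe that $\limsup_n Q_n(\cE_n^c)$ can be made smaller than any $\epsilon$ by choosing $M$ large, rather than asserting $Q_n(\cE_n^c)\to 0$ for fixed $M$.
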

\begin{proof}
	For all $M_1, M_2>0$ we define the collection $\cE_{n, M_1, M_2}$ of finite sequences \[
	E = (3k, y_1, \ldots, y_{m_n})
	\]  with  integers $k, y_1, y_2, \ldots \ge 0$ satisfying the following properties:
	\begin{enumerate}[\qquad a)]
		\item $\Pr{X = 3k}>0$  and $\Pr{Y=y_i} >0$ for all  $1 \le i \le m_n$,
		\item $|\sum_{i=1}^{m_n} y_i - m_n \Ex{Y} | \le  M n^{2/3}$,
		\item $|k(3 \Ex{Y} + 2) - n| \le   M_2  n^{2/3}$.
	\end{enumerate}
	For any such sequence $E$ we set $\ell = \sum_{i=1}^{m_n} y_i$. Note that $\ell = o(n)$ uniformly for all $E \in \cE_{n, M_1, M_2}$.  By Equations~\eqref{eq:zy} and~\eqref{eq:llty}, it holds uniformly for $E \in \cE_{n, M_1, M_2}$
	\begin{align}
		\label{eq:quotient22}
		\frac{\Prb{ (X_n, (Z_i)_{1\le i \le  m_n })  = E } } {\Prb{ (X_n, (Y_i)_{1\le i \le m_n })  = E  }}		
		&= \frac{\Prb{ \ell + \sum_{i=3k- m_n}^{3k} Y_i = n-2k }}{ \Prb{\sum_{i=1}^{3k}  Y_i = n - 2k}  } \\
		&=  \frac{\Prb{ \sum_{i=1}^{3k - m_n } Y_i = n-2k-\ell }}{ \Prb{\sum_{i=1}^{3k}  Y_i = n - 2k}  }  \nonumber\\
		&\to 1 \nonumber
	\end{align}
as $n \in 2 \ndN$ tends to infinity. By Theorem~\ref{te:main2} and~\eqref{eq:llty} it follows that for any $\epsilon>0$ we may choose $M_1, M_2>0$ large enough so that
\[
	\Pr{ (X_n, (Y_i)_{1\le i \le m_n })  \in \cE_{n, M_1, M_2} } > 1- \epsilon/2
\]
for all sufficiently large $n$. By~\eqref{eq:quotient22} it follows that
\[
\Pr{ (X_n, (Z_i)_{1\le i \le m_n })  \in \cE_{n, M_1, M_2} } \sim \Pr{ (X_n, (Y_i)_{1\le i \le m_n })  \in \cE_{n, M_1, M_2} } 
\]
and hence
\[
 \Pr{ (X_n, (Y_i)_{1\le i \le m_n })  \in \cE_{n, M_1, M_2} } > 1 -\epsilon
\]
for all sufficiently large $n$. This completes the proof.
\end{proof}

Furthermore, we expect  the following approximation to hold:

\begin{conjecture}
	Let $0< \delta < 3 \kappa / 2$ be given. As $n \in 2\ndN$ tends to infinity, 
	\begin{align}
		\label{eq:conj}
		d_{\mathrm{TV}}( (\cD_i(\mC_n))_{1\le i \le \min(\lfloor \delta n \rfloor, V_n) }, (\mD(i))_{1\le i \le \lfloor \delta n \rfloor } ) \to 0.
	\end{align}
\end{conjecture}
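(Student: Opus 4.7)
The plan is to reduce the conjecture to a one-dimensional total variation statement and then apply Scheffé's lemma. Since, conditional on the sequence of component sizes, each attached component is uniformly distributed among $\cD$-networks of that size under both laws (and is empty when the size is zero), the total variation distance in~\eqref{eq:conj} equals the total variation distance between the integer sequences $(Z_i)_{1\le i\le M_n}$ (under $\mC_n$) and $(Y_i)_{1\le i\le M_n}$ (i.i.d.\ with law~\eqref{eq:noway}), where $M_n = \lfloor \delta n \rfloor$. Setting $G_N(m) = \Pr{\sum_{i=1}^N Y_i = m}$ and conditioning on $X_n = 3k$, Equation~\eqref{eq:zy} yields the Radon--Nikodym derivative
\[
	\Lambda_n(y_1,\ldots,y_{M_n}) = \sum_{k} \Pr{X_n = 3k}\,\frac{G_{3k-M_n}(n-2k-\ell)}{G_{3k}(n-2k)}, \qquad \ell = \sum_{i=1}^{M_n} y_i.
\]
The crucial observation is that $\Lambda_n$ depends on its argument only through $\ell$. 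Writing $\Lambda_n = \tilde{\Lambda}_n(\ell)$ and $L_n = \sum_{i=1}^{M_n} Y_i$, the conjecture becomes $\tfrac{1}{2}\Ex{|\tilde{\Lambda}_n(L_n) - 1|} \to 0$ under the i.i.d.\ law $Q_n$. Since $\tilde{\Lambda}_n \ge 0$ and $\Ex{\tilde{\Lambda}_n(L_n)} = 1$, Scheffé's lemma reduces matters to showing $\tilde{\Lambda}_n(L_n) \to 1$ in $Q_n$-probability.

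For the asymptotic analysis I would parameterize $k = \kappa n /2 + \xi n^{2/3}$ and $\ell = M_n \Ex{Y} + \zeta n^{2/3}$. Theorem~\ref{te:main2} gives $\Pr{X_n = 3k} = n^{-2/3}(2 c_v h(2c_v \xi) + o(1))$, while the local limit theorem~\eqref{eq:llty} provides matching asymptotics for the numerator and denominator $G$-probabilities in terms of the Airy-map density $h$ at rescaled arguments. Converting the sum over $k$ to a Riemann integral of mesh $n^{-2/3}$, one obtains, uniformly for $\zeta$ in compact sets,
\[
	\tilde{\Lambda}_n(\ell) \longrightarrow \Psi(\zeta) := \left(\frac{3\kappa/2}{3\kappa/2 - \delta}\right)^{2/3} \int_\ndR 2 c_v\, h(2c_v \xi)\,\frac{h(-B \xi - C \zeta)}{h(-A \xi)}\, d\xi,
\]
for explicit positive constants $A, B, C$ depending on $\kappa, \delta$ and the LLT constant $c_1$. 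Under $Q_n$, the rescaled fluctuation $\zeta_n := (L_n - M_n\Ex{Y})/n^{2/3}$ converges in distribution to a non-degenerate $3/2$-stable random variable, supported on all of $\ndR$.

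The main obstacle is the verification of the identity $\Psi(\zeta) \equiv 1$ on the support of the limiting law of $\zeta_n$. This is an analytic relation between $h$ and the constants $c_v, c_1$. Both arise from the singular expansion~\eqref{eq:expd} of $\cD(x)$ as applied to the composition scheme $\bar{\cM}(x, u(1+\cD(x)))$: $c_1$ governs the local fluctuations of a single power-law sum $\sum Y_i$, while $c_v$ governs the fluctuations of the core size $V_n$ obtained by marking a uniformly random vertex in the same composition. The identity should therefore follow from a direct comparison of singular expansions together with a convolution relation for $h$ obtained via its Laplace transform, in the spirit of the algebraic verification $C = 2$ at the end of the proof of Theorem~\ref{te:main2}. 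An alternative route that bypasses the explicit integral is to bound $\Ex{\tilde{\Lambda}_n(L_n)^2}$ by a double saddle-point computation and show convergence to $1$, yielding $L^2$-convergence directly. Either strategy will require sharp uniform control on~\eqref{eq:llty} together with dominated-convergence-type estimates to justify the interchange of sum and limit and to handle the contribution of $|\xi|$ and $|\zeta|$ of arbitrarily large size.
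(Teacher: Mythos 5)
The statement you were asked to prove is a \emph{Conjecture}, not a theorem: the paper explicitly declines to prove it, writing immediately afterward that ``a decent amount of work would be needed to verify~\eqref{eq:conj}. We do not require this result here, hence we leave it as an open problem.'' There is therefore no proof in the paper against which your argument can be checked.

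What you have written is a plausible plan of attack, not a complete proof. The reduction to component sizes is correct (given sizes, both laws put the uniform distribution on networks of those sizes), the observation that the Radon--Nikodym derivative $\Lambda_n$ factors through $\ell = \sum_i y_i$ is correct and useful, and the Scheff\'e-type reduction to showing $\tilde\Lambda_n(L_n)\convp 1$ is valid because $\tilde\Lambda_n\ge 0$ and $\Exb{\tilde\Lambda_n(L_n)}=1$. The local-limit computation leading to $\Psi(\zeta)$ is also the natural thing to do and parallels the paper's own proof of Theorem~\ref{te:main3}. But the argument then reduces the conjecture to the Airy-density identity $\Psi(\zeta)\equiv 1$ on the support of the limiting stable law, and you leave this unverified: you write that it ``should therefore follow'' from a comparison of singular expansions and a Laplace-transform convolution relation, without carrying it out. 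That identity \emph{is} the conjecture. For all the rest of the argument shows, $\Psi$ could be a non-constant function with $\int \Psi\, dQ_\infty = 1$, in which case $\tilde\Lambda_n(L_n)$ would converge to a non-degenerate limit and the total variation distance would converge to a strictly positive constant rather than to $0$. Proving the identity requires knowing the precise algebraic relation between $c_1$ (from the LLT~\eqref{eq:llty} for iid sums of $Y$), $c_v$ (from the size-biased core distribution in Theorem~\ref{te:main2}), and the rescaling factors $(3\kappa/2)^{2/3}$ and $(3\kappa/2-\delta)^{2/3}$ appearing in the numerator and denominator LLTs.

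There are also several technical steps flagged but not supplied. The LLT~\eqref{eq:llty} gives $o(1)$ uniformly only over compact $t$; the sum over $k$ ranges over $\xi$ of order up to $M_2$ and the argument of $h$ in the numerator involves $\zeta$ ranging with $M_1$, so one needs a uniform version over these widening windows. The interchange of the $k$-sum with $n\to\infty$ and the truncation of the $\xi$-integral both require domination of the ratio $h(-B\xi-C\zeta)/h(-A\xi)$, which involves controlling the $3/2$-stable tail asymptotics of $h$ on both sides of the origin (the tails of $h$ are asymmetric). None of these are automatic, and the paper's own assessment that ``a decent amount of work would be needed'' accurately describes what remains missing here. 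In short: the framework is sound and probably the right one, but as written this is a proof outline with the crux of the matter left open.
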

That is, we conjecture  asymptotic independence of the components \emph{from each other}, but not from the core. It appears that a decent amount of work would be needed to verify~\eqref{eq:conj}. We do not require this result here,  hence we leave it as an open problem. 

\subsection{Local convergence of cubic planar graphs}
\label{sec:endgame}

Let us consider the random graph $\mO_n$ obtained  by removing all components $(\cD_i(\mC_n))_{1 \le i \le 3 V_n / 2}$ from $\mC_n$ and inserting at each edge of the $3$-connected core $\cM(\mC_n)$ an independent copy of the Boltzmann network $\mD$. Thus, letting $\mD(1), \mD(2), \ldots$, denote independent copies of $\mD$, we set $\cM(\mO_n) = \cM(\mC_n)$ and $\cD_i(\mO_n) = \mD(i)$ for $1 \le i \le 3 V_n/2$.

 Clearly the distribution of  $\mO_n$ differs from the distribution of $\mC_n$. For instance, $\mC_n$ has precisely $n$ vertices, whereas  $\mO_n$ has a random number of vertices that is distributed like $V_n + \sum_{i=1}^{3V_n/2} Y_i$. That is, by~\eqref{eq:llty} and Theorem~\ref{te:main2}, it concentrates at $n$, but exhibits Airy-type fluctuations (obtained by concatenating two $3/2$ stable laws) of order $n^{2/3}$. However, as we shall prove, local convergence of $\mO_n$ implies local convergence of $\mC_n$. 

Our first step will be to verify local convergence of the modified model $\mO_n$. For cubic graphs (but not for non-regular graphs), a uniform random vertex may be distinguished by first selecting a uniform random edge and then choosing one of its ends according to a fair independent coin flip. We will hence focus on vicinities of random edges from now on. 

The advantage is that we may partition the edge set of a cubic planar graph according to the components inserted at the $3$-connected core. Recall  that, as illustrated in Figure~\ref{fi:poly}, inserting a $(1 + \cD)$-structure $D$ at an edge $e$ of a $3$-connected network means that either we leave $e$ unchanged if $D$ has size zero, or we  delete $e$ and the root edge of $D$ and  connect its endpoints to the poles of $D$ via two new edges $e_1$ and $e_2$, if $D$ has positive size. Thus, we define the non-root edges of $D$ and either $e$  (if $D$ has size $0$), or $e_1$ and $e_2$ (if $D$ has positive size) to be \emph{associated} to $D$, then each edge of a cubic planar graph $C$ with a unique largest $3$-connected component $M$ is associated to a unique component $\cD_i(C)$ for some integer index $i$ in the range from $1$ to the number of vertices of~$M$.

The number $W$ of edges that we associate to the Boltzmann network $\mD$ has probability generating function
\begin{align}
	\Ex{w^W} = \Pr{Y=0} w  + \sum_{\substack{k \ge 4 \\ k \in 2 \ndN} } \Pr{Y=k} w^{3k/2 + 1}.
\end{align}
We define the size-biased version $\hat{W}$ of $W$ with distribution determined by
\begin{align}
	\Pr{\hat{W}=k} = \frac{k \Pr{W=k}}{\Ex{W}}.
\end{align}
\begin{definition}[Size-biased Boltzmann network]
	\label{def:sizebiased}
We let $\hat{\mD}$ denote a uniformly selected $(1 + \cD)$-structure that is associated to $\hat{W}$ edges. That is, for $\hat{W}=1$, $\hat{\mD}$ equals a placeholder value representing that inserting $\hat{\mD}$ at an edge of a $3$-connected cubic planar graphs leaves the graph unchanged. For $\hat{W} \in \{7, 9, \ldots\}$, $\hat{\mD}$ is uniformly selected among all $\cD$-networks with $\hat{W}-1$ edges. We distinguish a uniformly selected edge associated to $\hat{\mD}$. 
\end{definition}

\begin{lemma}
	\label{le:sizebias}
Let $r \ge 1$ denote a fixed integer. Let $e_1, \ldots, e_r$ denote uniformly selected edges of the random  graph $\mO_n$. For each $1 \le i \le 3 V_n / 2$ we let $\tilde{\cD}_i(\mO_n)$ denote $\cD_i(\mO_n)$ with the additional information which edges associated to $\cD_i(\mO_n)$ coincide with which edge from $e_1,\ldots, e_r$. 
Let $j_1, \ldots, j_r$ denote uniformly selected distinct elements of $\{1, \ldots, 3V_n/2\}$. Let $\hat{\mD}(1), \hat{\mD}(2), \ldots$ denote independent copies of $\hat{\mD}$. For each $1 \le i \le 3 V_n / 2$ we set
\begin{align}
	\tilde{\mD}(i) = \begin{cases} \mD(i), &i \notin \{j_1, \ldots, j_r\} \\ \hat{\mD}(i), &i=j_k\text{ with } 1 \le k \le r. \end{cases}
\end{align}
Then
\begin{align}
	\label{eq:approxresult}
	d_{\mathrm{TV}}\left( (\tilde{\cD}_i(\mO_n))_{1 \le i \le 3 V_n /2} , (\tilde{\mD}(i))_{1 \le i \le 3 V_n /2} \right) \to 0
\end{align}
as $n \in 2 \ndN$ tends to infinity.
\end{lemma}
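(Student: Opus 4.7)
I will work conditionally on the $3$-connected core $\cM(\mC_n)$ and on the number $L := 3V_n/2$ of its edges, which by Theorem~\ref{te:main2} concentrates at $3\kappa n / 2$ and in particular diverges. Denote by $W_i$ the number of edges that the component at position $i$ contributes to $\mO_n$ and set $S := \sum_{i=1}^L W_i$. The tail estimate~\eqref{eq:dtail} gives, by standard extreme-value estimates, $\max_i W_i = O_p(L^{2/3})$, and by the $3/2$-stable central limit theorem, $S = L \Ex{W} + O_p(L^{2/3})$. The probability that two uniformly sampled edges fall in the same component is bounded by $\sum_i(W_i/S)^2 \le \max_i W_i / S$, which tends to $0$ in probability (and is trivially $\le 1$); since $r$ is fixed, the event $\cC_n$ that $e_1, \ldots, e_r$ hit $r$ distinct components satisfies $\Pr{\cC_n^c} = o(1)$.

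On $\cC_n$ both joint laws admit explicit product densities on (ordered $r$-tuples of distinct positions) $\times$ (iid configurations of components) $\times$ (choice of edge marks within the selected components), and a direct computation shows that the density ratio (original over modified) at any such configuration equals $(L)_r \Ex{W}^r / S^r$ and is in particular independent of the configuration apart from its dependence on $S$. Consequently, conditionally on $\cM(\mC_n)$ and $L$,
\[
	2 \, d_{\mathrm{TV}} \;\le\; \Prb{\cC_n^c} + \Exb{|1 - (L)_r \Ex{W}^r / S^r|},
\]
where the expectation on the right is under the modified law, for which $S$ is the sum of $L - r$ independent copies of $W$ and $r$ independent copies of the size-biased $\hat{W}$.

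The main obstacle is to show that this second term tends to $0$: since $\Ex{W^2} = \infty$ and the size-biased size satisfies $\Ex{\hat W} = \Ex{W^2}/\Ex{W} = \infty$, the random variable $R := (L)_r \Ex{W}^r / S^r$ is not uniformly integrable, so a bounded-convergence argument does not apply directly. My plan is a truncation on the high-probability event $\cA_n$ on which every $W_i$ and every $\hat W_k$ is at most $L^{2/3 + \eta}$ for a fixed small $\eta > 0$ and on which $\tilde S := \sum_{i \notin \{j_1, \ldots, j_r\}} W_i$ lies within $L^{2/3 + \eta}$ of its mean; the same tail estimates used above give $\Prb{\cA_n^c} = O(L^{-\eta'})$ for some $\eta' > 0$. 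On $\cA_n$ we have $S = L \Ex{W}(1 + o(1))$ uniformly, whence $|R - 1| = O(L^{-1/3 + \eta})$; on $\cA_n^c$ the crude bound $R \le L^r \Ex{W}^r / \tilde S^r$ together with a secondary truncation handling the lower tail of $\tilde S$ yields $\Exb{R \, \one_{\cA_n^c}} = o(1)$. Combining these conditional bounds with the fact that the conditional TV is always $\le 1$, averaging over $\cM(\mC_n)$ and $L$ gives $d_{\mathrm{TV}}\to 0$ unconditionally, i.e.~\eqref{eq:approxresult}.
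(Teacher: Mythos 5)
Your overall strategy is essentially the paper's: condition on the core and on $L = 3V_n/2$, restrict to the high-probability event that the $r$ marked edges fall into distinct components, compute the explicit density ratio of the two conditional laws as $R = (L)_r\Ex{W}^r/S^r$ (with $S = \sum_{i=1}^L W_i$ the total edge count), and conclude by the law of large numbers for $S/L \to \Ex{W}$. That matches the paper's computation and its one-line appeal to the LLN.

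However, the pivot of your argument — the claim that $R$ ``is not uniformly integrable'' because $\Ex{\hat W} = \Ex{W^2}/\Ex{W} = \infty$ — is false, and the whole truncation apparatus you build to compensate is unnecessary. The heavy upper tail of $\hat W$ (or of $W$) can only make $S$ \emph{large}, hence $R$ \emph{small}; it poses no threat to an upper bound on $R$. What matters is the lower bound on $S$, and that bound is deterministic: every component contributes at least one associated edge, i.e.\ $W_i \ge 1$ and $\hat W_k \ge 1$ always (a placeholder contributes the core edge itself; a nonempty component with $Y=k$ contributes $3k/2+1$ edges). Hence $S \ge L$ surely, and
\[
	R \;=\; \frac{(L)_r\,\Ex{W}^r}{S^r} \;\le\; \frac{L^r\,\Ex{W}^r}{L^r} \;=\; \Ex{W}^r
\]
for every configuration. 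So $R$ is bounded, $R \to 1$ almost surely by the strong law of large numbers, and dominated convergence immediately gives $\Exb{|R-1|} \to 0$, which combined with $\Prb{\cC_n^c} \to 0$ under both laws yields $d_{\mathrm{TV}} \to 0$. Your truncation would also go through once one notes $\tilde S \ge L - r$ deterministically (so there is no ``lower tail of $\tilde S$'' to handle), but it is a detour caused by a misdiagnosis; the clean route is simply the pointwise bound $R \le \Ex{W}^r$. Also, for the record, the bound on the total variation should carry $\Prb{\cC_n^c}$ under \emph{both} laws, not just one, though both terms are $o(1)$ so this is cosmetic.
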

\begin{proof}
	With high probability all $r$ marked edges are going to be part of distinct components. We set $L := 3 V_n / 2$. Let $(D_1, \ldots, D_L)$ be a sequence of $(1 + \cD)$-structures, among which precisely $r$ members $D_{\ell_1}, \ldots, D_{\ell_r}$ carry a marked associated edge. Let $a_i$ denote the number of associated edges of $D_i$ for all $1 \le i \le L$.  That is, $a_i$ equals one plus the number of edges of $D_i$. Then
	\begin{align*}
		\frac{\Prb{(\tilde{\cD}_i(\mO_n))_{1 \le i \le \ell} = (D_1, \ldots, D_L) \mid L } }{ \Prb{(\tilde{\mD}(i))_{1 \le i \le \ell} = (D_1, \ldots, D_L) \mid L } } = \prod_{j=1}^r \frac{(L -j+1)\Ex{W}}{\sum_{i=1}^r a_i - \sum_{m=1}^{j-1} a_{\ell_m}}.
	\end{align*}
	The limit~\eqref{eq:approxresult} now follows from the law of large numbers.
\end{proof}

\begin{lemma}
	\label{le:convo}
	Let $v_n$ denote a uniformly selected vertex of $\mO_n$. 
	There is a random infinite cubic planar graph $\hat{\mC}$ such that 
	\begin{align}
		\label{eq:quenchedo}
		\mathfrak{L}((\mO_n, v_n) \mid \mO_n) \convdis \mathfrak{L}(\hat{\mC}).
	\end{align}
\end{lemma}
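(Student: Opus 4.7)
The plan is to identify $\hat{\mC}$ with the three-step object described in the introduction: the dual $\hat{\mM}$ of the type III UIPT obtained in Lemma~\ref{le:conv3cubic}, with every non-root edge of $\hat{\mM}$ replaced by an independent copy of $\mD$, the root edge replaced by an independent size-biased copy $\hat{\mD}$, and the root vertex chosen as an endpoint of the distinguished edge of $\hat{\mD}$ by a fair coin flip. Since $\mO_n$ is $3$-regular, a uniform random vertex $v_n$ may be obtained by picking a uniform random edge $e_n$ of $\mO_n$ together with an independent fair endpoint. The concentration $V_n \sim \kappa n$ from Theorem~\ref{te:main2}, together with the law of large numbers for $\sum_{i=1}^{3V_n/2} Y_i$ (whose mean is finite by~\eqref{eq:exy}), shows that the total vertex count of $\mO_n$ concentrates at $n$ and the edge count at $3n/2$. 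Hence it suffices to prove the quenched statement~\eqref{eq:quenchedo} with $e_n$ in place of $v_n$.

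The core step is a decomposition of the radius-$k$ neighbourhood of $e_n$ in $\mO_n$. The edge $e_n$ is associated to a unique index $j_1 \in \{1, \ldots, 3V_n/2\}$ of a Boltzmann component $\mD(j_1)$; let $e_n^{\mathrm{core}}$ denote the corresponding edge of $\cM(\mO_n)$. Corollary~\ref{co:second} applied in the setting of $\mO_n$ (which follows in the same way from the $O_p(n^{2/3})$ tail of the components $\mD(i)$, see~\eqref{eq:dtail}) gives $\max_i |\cD_i(\mO_n)| = O_p(n^{2/3})$, so on an event of high probability the radius-$k$ ball visits only a tight bounded number of distinct core edges. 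On this event, the ball is a continuous function of (a) the core $\cM(\mO_n)$ re-rooted at $e_n^{\mathrm{core}}$ together with its $k$-neighbourhood, and (b) the finite list of Boltzmann components attached to the core edges visited by this neighbourhood.

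Conditional on $\cM(\mO_n)$, Lemma~\ref{le:sizebias} applied with $r=1$ shows that, up to total variation $o(1)$, the marked component $\tilde{\cD}_{j_1}(\mO_n)$ is distributed as a size-biased copy $\hat{\mD}$ attached at a uniformly chosen core edge, while the remaining components are independent copies of $\mD$. Furthermore, Lemma~\ref{le:conv3cubic} gives the quenched convergence of $\cM(\mO_n)$ rooted at a uniform corner to $\hat{\mM}$, which in a cubic graph amounts to rooting at a uniform oriented edge and is hence compatible with the random choice of $e_n^{\mathrm{core}}$. Combining these with the continuity claim of the previous paragraph and the continuous mapping theorem yields, in the quenched sense, that the conditional law of the $k$-neighbourhood of $e_n$ given $\mO_n$ converges in probability to the corresponding law under the three-step construction, which establishes~\eqref{eq:quenchedo}.

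For the quenched statement one must also argue the equivalent joint form with two independent samples $v_n^{(1)}, v_n^{(2)}$; this is obtained by repeating the above with two independent uniform edges $e_n^{(1)}, e_n^{(2)}$ and invoking Lemma~\ref{le:sizebias} with $r=2$, after observing that the two marked edges lie in distinct components with high probability (because $\max_i |\cD_i(\mO_n)| = o_p(n)$). The main obstacle is the continuity statement in the second paragraph: one must verify that with high probability, uniformly in $n$, the radius-$k$ ball is genuinely determined by a fixed finite amount of data near $e_n^{\mathrm{core}}$. This requires excluding the rare event that a component touched by the ball contains a short path connecting two distant core edges and thereby short-circuits the neighbourhood, which is controlled by the $O_p(n^{2/3})$ bound on component sizes together with the cubic degree bound that limits the combinatorial complexity of any fixed-radius ball in the core.
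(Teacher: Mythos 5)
Your proposal is correct and takes essentially the same route as the paper's proof: pass from a uniform vertex to a uniform edge by the cubic degree, identify the associated component index and the corresponding corner of the core, invoke Lemma~\ref{le:sizebias} for the size-biasing of the marked component together with the independence of the others, invoke Lemma~\ref{le:conv3cubic} for the core, and conclude via the two-point quenched characterization. One remark on the last paragraph: the ``short-circuit'' concern you raise does not require the $O_p(n^{2/3})$ component bound at all. Inserting a network at a core edge $uv$ only attaches new vertices between $u$ and $v$, so every path in $\mO_n$ projects to a shorter-or-equal walk in $\cM(\mO_n)$; consequently the collection of core edges associated to the radius-$r$ ball of $e_n$ in $\mO_n$ is automatically contained in the edge set of the radius-$r$ ball of the corresponding corner in $\cM(\mO_n)$. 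This is exactly the monotonicity the paper uses (to show $I_i \subset J_i$), and it makes the disjointness and tightness claims immediate from Lemma~\ref{le:conv3cubic}; the cubic degree bound already ensures the ball meets only boundedly many core edges, so no appeal to Corollary~\ref{co:second} is needed.
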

\begin{proof}
	We let $e_1$ and $e_2$ denote independent uniform random edges of $\mO_n$. Let $1 \le i_1, i_2 \le 3V_n / 2$ denote the unique indices of the network components of $\mO_n$ to which $e_1$ and $e_2$ are associated. It is important to note that conditional on $\mO_n$, the indices $i_1$ and $i_2$ are not uniformly distributed, because they are more likely to be associated to large components than to small. However, the distribution of $i_1$ and $i_2$ conditional only on $V_n$  is that of uniformly and independently selected integers from $\{1, \ldots, 3 V_n / 2\}$. Let $f_1$ and $f_2$ denote the corners of  the $3$-connected core $\cM(\mC_n)$ obtained from the edges corresponding to $i_1$ and $i_2$ by orienting them according to fair independent coin flips. It follows by Lemma~\ref{le:conv3cubic} that 
	\begin{align}
	\label{eq:convergence}
	((\cM(\mC_n), f_1), (\cM(\mC_n),f_2)) \convd (\hat{\mM}^{(1)},\hat{\mM}^{(2)})
	\end{align}
	with $\hat{\mM}^{(1)},\hat{\mM}^{(2)}$ denoting independent copies of $\hat{\mM}$.
	
	 Let $r \ge 1$ denote a fixed integer. Each edge of $\mO_n$ corresponds to a unique edge of $\cM(\mC_n)$. The $r$-neighbourhoods of $e_1$ and $e_2$ in $\mO_n$ hence correspond to collections $I_1$ and $I_2$ of edges of $\cM(\mC_n)$. Since, for $i=1,2$, the set $I_i$ is a subset of the collection of edges $J_i$ of the $r$-neighbourhood of $f_i$ in $\cM(\mC_n)$, it follows that 
	 \begin{align}
	 	\label{eq:i1i2disjoint}
	 	\Pr{I_1 \cap I_2 = \emptyset} \to 1
	 \end{align}
	 as $n \in 2\ndN$ tends to infinity. Indeed, local convergence of $(\cM(\mC_n), f_1)$ entails that the $2r$-neighbourhood of $f_1$ has stochastically bounded number of edges. Among the $3V_n/2 = (3/2) \kappa n + O_p(n^{2/3})$ edges, it is unlikely that the uniformly distributed edge  $f_2$ falls into this stochastically bounded set. Hence, $f_1$ and $f_2$ have distance at least $2r$ with probability tending to $1$ as $n \in 2 \ndN$ tends to infinity. Hence
	 \[
	 	\Pr{J_1 \cap J_2 = \emptyset} \to 1,
	 \]
	 and this verifies~\eqref{eq:i1i2disjoint}.

	 By Lemma~\ref{le:sizebias} and the limit \eqref{eq:i1i2disjoint}, it follows that jointly the components corresponding to $i_1$ and $i_2$ behave like independent copies of $\hat{\mD}$, the components corresponding to all edges from $(I_1 \cup I_2) \setminus \{i_1, i_2\}$ behave like independent copies of $\mD$, and the $r$-neighbourhoods of $i_1$ and $i_2$ in $\cM(\mC_n)$ like  independent copies of the $r$-neighbourhood of the root in $\hat{\mM}$.  Since the $r$-neighbourhoods of $e_1$ and $e_2$ in $\mO_n$ are entirely determined by these pieces, it follows that
	 \begin{align}
	 		((\mO_n, e_1), (\mO_n,e_2)) \convd (\hat{\mC}^{(1)},\hat{\mC}^{(2)}),
	 \end{align}
 	with $\hat{\mC}^{(1)}, \hat{\mC}^{(2)}$ denoting independent copies of a random infinite cubic planar graph $\hat{\mC}$ constructed by inserting  independent copies of $\mD$ into each non-root edge of $\hat{\mM}$, and inserting an independent copy of $\hat{\mD}$ at the root edge of $\hat{\mM}$. This implies~\eqref{eq:quenchedo} if we view $\hat{\mC}$ as vertex-rooted instead, by distinguishing an endpoint of its root edge according to an independent fair coin flip.
\end{proof}

We are now ready to prove our main result, the local convergence of $\mC_n$. Having established local convergence of the modified model $\mO_n$ towards a limit $\hat{\mC}$, it remains to argue that $\mO_n$ is a good approximation of $\mC_n$. We will do so using Theorem~\ref{te:main3}.

\begin{proof}[Proof of Theorem~\ref{te:main}]
	By Markov's inequality it follows that for any $0 < \delta < 3 \kappa/2$ the sum of the number of vertices $v(\mD(i))$ incident to edges associated to the network component $\mD(i)$, $1 \le i \le 3 V_n /2$, satisfies
	\begin{align}
		\label{eq:indstep}
		\sum_{i= 3V_n/2 - \lfloor \delta n \rfloor +1}^{3 V_n / 2} v(\mD(i)) \le 2\Ex{W} \delta n
	\end{align}
with probability tending to $1$ as $n \in 2\ndN$ tends to infinity.

We let $F_{n, \delta}$ denote the collection of vertices of $\mO_n$ that are incident to edges associated to components $\cD_i(\mO_n) = \mD(i)$ for $3 V_n / 2 - \lfloor \delta n \rfloor < i \le 3 V_n / 2$. For each integer $r \ge 0$ we let $U_r(F_{n, \delta})$ denote the collection of vertices in $\mO_n$ with graph distance at most $r$ from $F_{n, \delta}$. We will show that for each $r \ge 0$ and $\epsilon'>0$
\begin{align}
	\label{eq:ulem}
	\lim_{\delta \downarrow 0} \limsup_{n \to \infty} \Pr{ |U_r(F_{n,\delta})| > \epsilon' n} = 0. 
\end{align}

In order to prove~\eqref{eq:ulem} we proceed by induction on $r$.
For $r = 0$,~\eqref{eq:ulem} follows directly from Inequality~\eqref{eq:indstep}. For the induction step, let  $r \ge 1$ and assume that the statement holds for $r-1$. If~\eqref{eq:ulem} fails for $r$, then there exists a constant $\epsilon_0 >0$, a sequence $(n_\ell)_{\ell \ge 1}$ of even integers, and sequences $\epsilon'_\ell, \delta_\ell \downarrow 0$ (as $\ell \to \infty$) such that
\[
\Pr{|U_r(F_{n_\ell, \delta_\ell})| > \epsilon' n_\ell, |U_{r-1}( F_{n_\ell, \delta_\ell})| < \epsilon'_\ell n_\ell} > \epsilon_0
\]
for all integers $\ell \ge 1$. 

Hence with probability at least $\epsilon_0>0$ the following statements hold: There are at least $n_\ell( \epsilon' - \epsilon'_\ell)$ vertices in $U_{r}(F_{n_\ell, \delta_\ell}) \setminus U_{r-1}(F_{n_\ell, \delta_\ell})$.  Each of these vertices is incident to an edge whose other end belongs to $U_{r-1}(F_{n_\ell, \delta_\ell})$. We choose a sequence $(d_\ell)_\ell$ such that $d_\ell \to \infty$ and $d_\ell \epsilon_\ell' \to 0$. The subset of vertices in $U_{r-1}(F_{n_\ell, \delta_\ell})$ with degree smaller than $d_\ell$ is linked to at most $d_\ell \epsilon'_\ell n_\ell = o(n_\ell)$ vertices from $U_{r}(F_{n_\ell, \delta_\ell}) \setminus U_{r-1}(F_{n_\ell, \delta_\ell})$. It follows that at least $(\epsilon' - o(1))n_\ell$ vertices from $U_{r}(F_{n_\ell, \delta_\ell}) \setminus U_{r-1}(F_{n_\ell, \delta_\ell})$ have a neighbour from $U_{r-1}(F_{n_\ell, \delta_\ell})$ with degree at least $d_\ell$.

Thus, with probability at least $\epsilon_0>0$, a uniformly selected vertex of $\mO_n$ has a neighbour with degree at least $d_\ell$. As $d_\ell \to \infty$, this contradicts Lemma~\ref{le:convo}, by which $\mO_n$ rooted at a uniform random vertex has a local limit and hence the $2$-neighbourhood of that vertex needs to have a stochastically bounded size. This completes the induction and hence the verification of~\eqref{eq:ulem}.

Let $G$ be an arbitrary rooted connected graph with radius at most $r$. We let $N_G(\mO_n)$ and $N_G(\mC_n)$ denote the number of vertices in $\mO_n$ and $\mC_n$ whose $r$-neighbourhood is isomorphic to $G$ as rooted graphs. By Lemma~\ref{le:convo},
\[
	\frac{N_G(\mO_n)}{|\mO_n|} \convp \Pr{U_r(\hat{\mC}) \simeq G}.
\]
For ease of notation, we set $p_G := \Pr{U_r(\hat{\mC}) \simeq G}$.
By Equation~\eqref{eq:llty} and Theorem~\ref{te:main2},
\[
	n^{-1}|\mO_n| = n^{-1}(V_n + \sum_{i=1}^{3V_n/2} Y_i) \convp 1.
\]
Hence
\begin{align}
	\label{eq:concentratep}
	\frac{N_G(\mO_n)}{n} \convp p_G.
\end{align}

Now, let $\epsilon,\epsilon'>0$ be given. By Equation~\eqref{eq:ulem} we may choose $\delta>0$ small enough such that for all sufficiently large $n$
\begin{align}
	\label{eq:upupup}
	\Pr{ |U_r(F_{n,\delta})| > \epsilon' n} < \epsilon.
\end{align}
By Theorem~\ref{te:main3} there exist constants $0<c<C$ and sets $(\cE_n)_{n \ge N}$ such that for all sufficiently large $n$ Inequalities~\eqref{eq:doa},~\eqref{eq:dob}, and~\eqref{eq:N2} hold uniformly for all $E \in \cE_n$. We define a subset $\cE_n(1) \subset \cE_n$ of sequences $(M, D_1, D_2, \ldots)$ with the property that there are at most $\epsilon' n$ vertices in $M$ with graph distance at most $r$ from the last $\lfloor \delta n \rfloor$ edges of $M$. By Inequality~\eqref{eq:upupup} we know that $\cM(\mC_n)$ has this property with probability at least $1 - \epsilon$. It follows that
\begin{align}
	\label{eq:firstpart}
	\Prb{ (\cM(\mC_n), (\cD_i(\mC_n))_{1\le i \le 3V_n/2 - \lfloor \delta n \rfloor })  \notin \cE_n(1)  } < 2\epsilon
\end{align}
and
\begin{align}
	\Prb{ (\cM(\mC_n),  (\mD(i))_{1\le i \le 3V_n/2 - \lfloor \delta n \rfloor }) \notin \cE_n(1)  } < 2\epsilon
\end{align}
and for all elements $E \in \cE_n(1)$
\begin{align}
	\label{eq:notin}
	c < \frac{\Prb{ (\cM(\mC_n), (\cD_i(\mC_n))_{1\le i \le 3V_n/2 - \lfloor \delta n \rfloor })  =E  }} {	\Prb{ (\cM(\mC_n),  (\mD(i))_{1\le i \le 3V_n/2 - \lfloor \delta n \rfloor }) =E  }}  < C.
\end{align}
We may write $N_G(\cdot) = N_G'(\cdot) + N_G''(\cdot)$ with $N_G'(\cdot)$ counting only the vertices whose $r$-neighbourhood is isomorphic to $G$ and does not contain any vertex that is incident to any of the last $\lfloor \delta n \rfloor$ edges of $\cM(\mC_n)$. 
Note that $N_G'(\cdot)$ is  entirely determined by $\cM(\mC_n)$ and all but the last $\lfloor \delta n \rfloor$ network components attached to $\cM(\mC_n)$.  This allows us to define the subset $\cE_n(2) \subset \cE_n(1)$ of all configurations in $\cE_n(1)$ for which \[
|N'_G(\cdot) / n - p_G| > 2 \alpha \epsilon'
\] for 
\[
	\alpha := 7 \Ex{W}.
\]
Not all  configurations with this inequality property lie in~$\cE_n(1)$, but $\cE_n(2)$ really only contains those that do. By the triangle inequality, $|N'_G(\cdot) / n - p_G| > 2 \alpha \epsilon'$ entails that $|N_G(\cdot) / n - p_G| >  \alpha \epsilon'$ or $N_G''(\cdot) / n > \alpha \epsilon'$. Hence, by Inequality~\eqref{eq:notin},
\begin{align}
	\label{eq:masterineq}
	&\Prb{ (\cM(\mC_n), (\cD_i(\mC_n))_{1\le i \le 3V_n/2 - \lfloor \delta n \rfloor })  \in \cE_n(2)  } \\
	&\le C \Prb{ (\cM(\mC_n), (\mD(i))_{1\le i \le 3V_n/2 - \lfloor \delta n \rfloor })  \in \cE_n(2)  } \nonumber \\
	&\le C \Prb{ |N_G(\mO_n)/n - p_G| > \alpha\epsilon'}  \nonumber \\
	&\quad + C \Prb{ N_G''(\mO_n)/n > \alpha\epsilon', (\cM(\mC_n), (\mD(i))_{1\le i \le 3V_n/2 - \lfloor \delta n \rfloor })  \in \cE_n(2)}. \nonumber
\end{align}
By the limit~\eqref{eq:concentratep} we know 
\begin{align}
	\label{eq:chief1}
	\Prb{ |N_G(\mO_n)/n - p_G| > \alpha\epsilon'} = o(1).
\end{align}
As for the second summand in the upper bound, since $\cE_n(2) \subset \cE_n(1)$ we know that for each configuration in $\cE_n(2)$ there are  at most $\epsilon'n$ vertices  in the $3$-connected core whose graph distance is at most $k$ from the last $\lfloor \delta n \rfloor$ edges. Hence the collection $I$ of indices of edges that are incident to  such a vertex and that do not belong to the last $\lfloor \delta n \rfloor$ satisfy
\[
	|I| + \lfloor \delta n \rfloor \le 3 \epsilon' n.
\]
Setting $J = I \cup \{3 V_n / 2- \lfloor \delta n \rfloor+1,\ldots,  3 V_n/2\}$, it follows by Markov's inequality that
\begin{align*}
	\sum_{i\in J} v(\mD(i)) \le 6\Ex{W}  \epsilon' n
\end{align*}
with probability tending to $1$ as $n \to \infty$.  However, if $N_G''(\mO_n) / n > \alpha\epsilon'n$, then $\sum_{i\in J} v(\mD(i)) > \alpha\epsilon'n$. Thus,
\begin{align}
	\label{eq:chief2}
	\Prb{ N_G''(\mO_n)/n > \alpha\epsilon', (\cM(\mC_n), (\mD(i))_{1\le i \le 3V_n/2 - \lfloor \delta n \rfloor })  \in \cE_n(2)} = o(1).
\end{align} 
Combining~\eqref{eq:masterineq},~\eqref{eq:chief1}, and~\eqref{eq:chief2}, it follows that
\begin{align}
	\Prb{ (\cM(\mC_n), (\cD_i(\mC_n))_{1\le i \le 3V_n/2 - \lfloor \delta n \rfloor })  \in \cE_n(2)  } \to 0.
\end{align}
By Inequality~\eqref{eq:firstpart} it follows that
\begin{align}
	\Pr{|N_G'(\mC_n)/ n - p_G| > \alpha \epsilon'} < 3 \epsilon
\end{align}
for all sufficiently large $n$. Since $\epsilon, \epsilon'>0$ were arbitrary, it follows that
\begin{align}
	\label{eq:stochlower}
	N_G(\mC_n)/n - p_G \ge o_p(1).
\end{align}
In order to verify a matching upper bound, we select a finite set $\mathfrak{G}_0$ of unlabelled rooted graphs with $G \in \mathfrak{G}_0$ and \[
\sum_{H \in \mathfrak{G}_0} p_H = \Pr{U_r(\hat{\mC}) \in \mathfrak{G}_0 } > 1 - \epsilon.
\]
Clearly it holds that
\[
	\sum_{H \in \mathfrak{G}_0} N_H(\mC_n) / n \le 1
\]
and consequently 
\[
 \sum_{H \in \mathfrak{G}_0}(N_H(\mC_n)/n - p_H) \le \epsilon.
\]
Since $\mathfrak{G}_0$  is finite and since Inequality~\eqref{eq:stochlower} holds for arbitrary rooted graphs $G$, it follows that there is a (possibly negative) sequence $(t_n)_{n \ge 1}$ with $t_n = o(1)$ such that 
\[
	\Pr{ N_H(\mC_n) / n - p_H \ge t_n \text{ for all $H \in \mathfrak{G}$}} \to 1
\]
as $n \in 2 \ndN$ tends to infinity. As $G \in \mathfrak{G}_0$, it follows that
\[
	N_G(\mC_n) / n - p_G \le 2\epsilon
\]
with probability tending to $1$ as $n$ becomes large. As $\epsilon>0$ was arbitrary, it follows with~\eqref{eq:stochlower}
\[
	N_G(\mC_n)/n \convp p_G.
\]
As this holds for arbitrary $G$, it follows that
\begin{align*}
	\mathfrak{L}((\mC_n, v_n) \mid \mC_n) \convdis \mathfrak{L}(\hat{\mC}),
\end{align*}
with $v_n$ denoting a uniformly selected vertex of $\mC_n$.
\end{proof}

\section{Disconnected cubic planar graphs}
\label{sec:cubicdisconnected}

We also obtain local convergence of the  uniform random  $n$-vertex simple cubic planar graph $\mG_n$ that is not required to be connected. 

\begin{corollary}
	\label{co:concon}
Let $u_n$ denote a uniformly selected vertex of $\mG_n$. Then
\begin{align}
	\mathfrak{L}((\mG_n, u_n) \mid \mG_n) \convdis \mathfrak{L}(\hat{\mC}).
\end{align}
\end{corollary}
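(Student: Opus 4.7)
The plan is to reduce the claim to Theorem~\ref{te:main} via a classical subexponential exp-log decomposition: with high probability $\mG_n$ consists of a single giant connected component, distributed like $\mC_{n-R_n}$ for a tight remainder $R_n$, together with a Poisson cloud of microscopic fragments. A uniform random vertex $u_n$ then lies in the giant component with probability tending to $1$, so its local limit is inherited from Theorem~\ref{te:main}.

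For the analytic input, let $\cG(x) = \exp(\cC(x))$, where $\cC(x)$ is the exponential generating series of connected cubic planar graphs. From~\eqref{eq:cbulletexp} and the relation $\cC^\bullet(x) = x\cC'(x)$, a termwise integration yields a singular expansion
\[
\cC(x) = C_0 + C_2(1 - x/\rho) + C_{5/2}(1 - x/\rho)^{5/2} + O\bigl((1 - x/\rho)^3\bigr),
\]
with $C_0 = \cC(\rho)$ finite. Since $\exp$ is entire, Flajolet--Odlyzko singularity transfer gives
\[
[x^n]\cG(x) \sim e^{\cC(\rho)} [x^n]\cC(x),
\]
placing us in a subexponential exp-log scheme of index $7/2$.

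Next I would invoke the well-known giant-component behaviour of such schemes: the total size $R_n$ of all non-giant components is tight, and the isomorphism classes of the non-giant components converge in total variation to a Poisson random measure whose intensity on size-$k$ connected cubic planar graphs is proportional to $\rho^k C_k / k!$. Conditionally on the sizes of the components, the giant is uniform among connected cubic planar graphs on its vertex set, and hence distributed like $\mC_{n - R_n}$.

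To deduce the quenched limit, I would use the two-vertex characterisation. Let $u_n^{(1)}, u_n^{(2)}$ be independent uniform vertices of $\mG_n$. Since $R_n/n \convp 0$, both vertices land in the giant component with probability tending to $1$, and conditionally on that event and on the value of $R_n$ they form two independent uniform vertices of a copy of $\mC_{n - R_n}$. Theorem~\ref{te:main} then gives
\[
((\mG_n, u_n^{(1)}), (\mG_n, u_n^{(2)})) \convd (\hat{\mC}^{(1)}, \hat{\mC}^{(2)})
\]
with $\hat{\mC}^{(1)}, \hat{\mC}^{(2)}$ independent copies of $\hat{\mC}$, which is equivalent to the stated quenched convergence. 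The main obstacle is simply pinning down the right black-box statement for the tight remainder and the Poisson microscopic cloud in the subexponential regime at hand, where $\cC(\rho)$ is finite; once this is in place, the rest is a routine conditioning argument on top of Theorem~\ref{te:main}.
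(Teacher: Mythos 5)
Your proposal is correct and follows essentially the same route as the paper: the paper also reduces to Theorem~\ref{te:main} by noting that $\mG_n$ has a unique giant component with $n - O_p(1)$ vertices and fragments converging to a Boltzmann--Poisson graph (cited from a general result on random set partitions), and then inherits the quenched local limit from the giant. You merely supply explicitly the singular expansion of $\cC$, the exp-log mechanism, and the two-vertex characterisation that the paper leaves implicit or delegates to references.
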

Corollary~\ref{co:concon} follows from~Theorem~\ref{te:main}, because $\mG_n$ exhibits a unique giant connected component, and the mass of the small fragments $\mathrm{frag}(\mG_n)$ obtained by deleting the largest component remains stochastically bounded. The small fragments even exhibit a so-called Boltzmann--Poisson random graph as limit:
\begin{proposition}
	\label{pro:connected}
	The largest connected component of the random $n$-vertex cubic planar graph $\mG_n$ has $n - O_p(1)$ vertices. As unlabelled finite random graphs, it holds that
	\[
	\mathrm{frag}(\mG_n) \convdis \mG
	\]
	for a Boltzmann--Poisson random graph $\mG$ that assumes any finite unlabelled cubic planar graph $G$ with $|G|$ vertices and $\mathrm{aut}(G)$ automorphisms with probability
	\[
	\Pr{\mG = G} = \frac{\rho^{|G|}}{\mathrm{aut}(G) \exp(\cC(\rho))}
	\]
	for constants $\rho \approx 0.319224$ and $\cC(\rho) \approx 0.00060$ defined in Section~\ref{sec:largest}.
\end{proposition}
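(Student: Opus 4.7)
The plan is to apply the classical Boltzmann--Poisson limit paradigm, available whenever the exponential generating function of the connected structures has an algebraic singularity strictly inside the disc of convergence of $\exp$. The starting point is the labelled set identity $\cG(x) = \exp(\cC(x))$, where $\cG$ denotes the EGF of all (possibly disconnected) cubic planar graphs. The class $\cC$ has radius of convergence $\rho$ and a singular expansion of algebraic type at $\rho$, which may be extracted as in Section~\ref{sec:largest} from the expansion of $\cC^\bullet = x\cC'$; in particular $\cC(\rho)$ is finite (numerically $\approx 0.00060$). Since $\exp$ is entire, $\cG$ inherits the same singularity type of $\cC$ at $\rho$ with a multiplicative factor $\exp(\cC(\rho))$ on the leading non-analytic coefficient, and standard transfer theorems yield
\[
g_n := n!\,[x^n]\cG(x) \sim \exp(\cC(\rho))\,c_n, \qquad n \in 2\ndN,\; n \to \infty,
\]
where $c_n := n!\,[x^n]\cC(x)$.

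For a fixed unlabelled finite cubic planar graph $G$ with $k := |G|$ vertices, every labelled realisation of $\mG_n$ with $\mathrm{frag}(\mG_n) \cong G$ is obtained by choosing a $k$-subset of $[n]$ to host the fragment ($\binom{n}{k}$ ways), placing a labelled copy of $G$ on it ($k!/\mathrm{aut}(G)$ ways), and placing a labelled connected cubic planar graph on the remaining $n-k$ vertices ($c_{n-k}$ ways). The condition that this connected piece is the unique largest component is automatic as soon as $n - k > k$, hence for all sufficiently large $n$. Inserting the asymptotic $c_{n-k}/g_n \sim \exp(-\cC(\rho))\,\rho^k\,(n-k)!/n!$ from the previous step, the factorials cancel exactly and we obtain
\[
\Pr{\mathrm{frag}(\mG_n) \cong G} \longrightarrow \frac{\rho^{|G|}}{\mathrm{aut}(G)\exp(\cC(\rho))} = \Pr{\mG = G}.
\]

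Summing the limit probabilities over all unlabelled finite cubic planar graphs gives $\sum_G \rho^{|G|}/\mathrm{aut}(G) = \cG(\rho) = \exp(\cC(\rho))$, so $\mG$ has total mass $1$ and defines a bona fide probability distribution. Pointwise convergence of non-negative discrete densities with equal total mass upgrades by Scheffé's lemma to total-variation convergence, establishing $\mathrm{frag}(\mG_n) \convdis \mG$. Tightness of $|\mG|$ then forces $|\mathrm{frag}(\mG_n)| = O_p(1)$, so the largest connected component of $\mG_n$ has $n - O_p(1)$ vertices. The only delicate step is the transfer $g_n \sim \exp(\cC(\rho))\,c_n$, which amounts to verifying that $\cG$ has the same dominant algebraic singularity exponent as $\cC$ with leading constant scaled by $\exp(\cC(\rho))$; this is a routine composition of singular expansions once the expansion of $\cC$ at $\rho$ is in hand.
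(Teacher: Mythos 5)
Your proof is correct and follows essentially the same route as the paper. The paper disposes of the proposition by invoking a general Gibbs-partition/set-partition theorem (\cite[Thm. 3.4]{doi:10.1002/rsa.20771}) together with the known singular expansion of $\cC$; your argument simply unpacks that theorem in this specific case, via the set identity $\cG = \exp(\cC)$, the singularity transfer $g_n \sim \exp(\cC(\rho))\,c_n$ (which requires the $(1-x/\rho)^{3/2}$ expansion of $\cC$ on a $\Delta$-domain, as established in Section~\ref{sec:largest}), and the direct count $\binom{n}{k}\,\frac{k!}{\mathrm{aut}(G)}\,c_{n-k}$ for fixed fragment isomorphism type.
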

Proposition~\ref{pro:connected} readily follows from a general result on random set partitions~\cite[Thm. 3.4]{doi:10.1002/rsa.20771} and the asymptotic growth formula for  the number of connected cubic planar graphs by~\cite{zbMATH05122852, zbMATH07213288}. Such a behaviour is quite universal for random graphs from restricted classes, see for example~\cite[Thm. 1.3]{MR2507738} and~\cite[Thm. 4.2]{doi:10.1002/rsa.20771}. The number of connected components of $\mG$ follows a Poisson distribution, see also~\cite[Thm. 4.6]{MR3068033} and~\cite[Thm. 2]{zbMATH07213288}, hence the name Boltzmann--Poisson random graph.

\section*{Acknowledgement}

I am grateful to  Michael Drmota for related discussions.

\bibliographystyle{abbrv}
\bibliography{cubic}

\end{document}